        \newcommand{\xtwoheadrightarrow}[2][]{\xrightarrow[#1]{#2}\mathrel{\mkern-14mu}\rightarrow} %https://tex.stackexchange.com/questions/260554/two-headed-version-of-xrightarrow
        \crefname{subsection}{Subsection}{Subsections}
        \crefname{subsection}{Subsection}{Subsections}
        \tikzset{every picture/.style=thick}
        \tikzset{curvy/.style={decorate, decoration=snake,segment length=.5cm}}
    \theoremstyle{plain}
        \newtheorem{theorem}{Theorem}[section]
        \newtheorem{corollary}[theorem]{Corollary}
        \newtheorem{lemma}[theorem]{Lemma}
        \newtheorem{proposition}[theorem]{Proposition}
    \theoremstyle{definition}
        \newtheorem{definition}[theorem]{Definition}
    \theoremstyle{remark}
        \newtheorem{remark}[theorem]{Remark}
        \newtheorem{question}[theorem]{Question}
    \title{Orderable Thompson-like groups arising from Ore categories}
    \author{Davide Perego \and Matteo Tarocchi}
    \date{}
    \thanks{
    Both authors acknowledge support from the research grant PID2022-138719NA-I00 (Proyectos de Generación de Conocimiento 2022) financed by the Spanish Ministry of Science and Innovation.
    The first author is supported by the Swiss Government Excellence Scholarship and from Swiss NSF grant 200020-200400.
    The second author is supported by the French project GoFR (ANR-22-CE40-0004).
    Both the authors are members of the Gruppo Nazionale per le Strutture Algebriche, Geometriche e le loro Applicazioni (GNSAGA) of the Istituto Nazionale di Alta Matematica (INdAM)%
    }
    \address{Section de mathématiques, Université de Genève, rue du Conseil-Général 7-9, 1205 Genève, Switzerland}
    \email{\href{mailto:dperego9@gmail.com}{dperego9@gmail.com}}
    \address{Université Paris-Saclay, CNRS, Laboratoire de mathématiques d’Orsay, Orsay, France, EU \& Université de Rennes, CNRS, IRMAR, Rennes, France, EU}
    \email{\href{mailto:matteo.tarocchi.math@gmail.com}{matteo.tarocchi.math@gmail.com}}
    \newcommand{\F}{\mathcal{F}}
    \newcommand{\G}{\mathcal{G}}
    \newcommand{\D}{\mathcal{D}}
    \renewcommand{\P}{\mathcal{P}}
    \newcommand{\R}{\mathbf{R}}
    \newcommand{\C}{\mathcal{C}}
    \newcommand{\B}{\mathcal{B}}
    \newcommand{\PB}{\mathcal{PB}}
    \newcommand{\FH}{\mathcal{FH}}
    \newcommand{\BH}{\mathcal{BH}}
    \newcommand{\Ob}{\mathrm{Ob}}
\begin{document}

\begin{abstract}
We give sufficient conditions for left- and bi-orderability of fundamental groups of Ore categories in terms of indirect factors, including Thompson groups and many of their generalizations.
Besides recovering known results, we prove that braided groups of fractions of digit rewriting systems (which generalize braided Thompson groups to the wider setting of topological full groups of edge shift) are left-orderable, and that their purely braided counterparts are bi-orderable.
In particular, the braided Houghton groups are left-orderable.
\end{abstract}

\maketitle

%%%%%%%%%%%%%%%%%%%%%%%%%

\section{Introduction}

Many generalizations of Thompson groups have the following form:
there is a notion of \textit{expansion} (classically consisting of attaching a caret to a complete rooted binary tree) and a notion of \textit{isomorphism} of objects arising from such expansions (for Thompson's groups $F$, $T$ and $V$, these are trivial, cyclic, or any permutation of leaves, respectively).
The Thompson-like group is then a group of ``fractions'', each given by two expansions and an isomorphism between the objects determined by such expansions (the typical tree pair diagrams for $F$, $T$ and $V$ model precisely this).
As described in \cite{Witzel}, this is formalized by the notion of fundamental groups of certain categories $\F\bowtie\G$:
the factor $\F$ determines the expansions and $\G$ determines the isomorphisms;
the category is assumed to satisfy some properties to ensure that the elements of the fundamental group have their ``fraction-like'' form:
expansion--isomorphism--reduction (\cref{rmk:normal:forms}).
The required properties make $\F\bowtie\G$ a so-called Ore category.

\cref{thm:main} shows how, for certain products $\F\bowtie\G$, if the fundamental group of the first factor $\F$ is itself left- or bi-orderable and if $\G$ has a morphism $\Phi$ to a discrete groupoid comprised of left- or bi-ordered groups and satisfies certain conditions of compatibility with the ordering of $\F$ (\cref{def:compatible:ordering}), then the fundamental group of $\F\bowtie\G$ is left- or bi-orderable, respectively.

As an application, we let $\F$ be a category modeling a rewriting system whose rewriting rules are replacements of subwords with digits and $\G$ be a groupoid of (pure) braids on the digits of finite words.
The fundamental groups of $\F$ are instances of diagram groups and those of $\F\bowtie\G$ are (purely) braided versions of such groups, which can be thought of as braided versions of topological full groups of edge shifts \cite{Matui}.
They include the braided Thompson groups \cite{DehornoyThompsonOrder,,BrinBraided} and the braided Houghton groups introduced in \cite{braidedHoughtonThesis} and studied in \cite{BraidedHoughton} as mapping class groups.
In particular, we show that the braided Houghton groups are left-orderable, which, as far as the authors are aware, is a new result.

In most previously considered cases of fundamental groups of categories $\F\bowtie\G$, the factor $\G$ was a discrete groupoid (i.e., just a collection of disjoint groups).
Our examples show additional complexity, since $\G$ will generally not be discrete.

The remainder of the Introduction is devoted to stating \cref{thm:main} and briefly explaining the necessary hypotheses.
We refer to the upcoming \cref{sub:Ore:cat} for the definitions of right-Ore categories, indirect products $\F\bowtie\G$ and the actions of $\F$ and $\G$ on each other (denoted by $g^f$ and $g \cdot f$).

Recall that a groupoid $\G$ is \textbf{discrete} if $\G(x,y) = \emptyset$ whenever $x \neq y$.

\begin{definition}
\label{def:discrete:ordering}
Let $\D$ be a discrete groupoid.
We say that a \textbf{discrete left-ordering} (or \textbf{bi-ordering}) $<_\D$ of $\D$ is a collection of left-orderings (or bi-orderings) $<_x$ of the groups $\D(x,x)$ for all objects $x \in \Ob(\D)$.
\end{definition}

This is not related to the notion known in the literature as \textit{groupoid ordering}.

We will denote by $\mathrm{C}\G(x)$ the connected component of $\G$ that contains the object $x \in \Ob(\G)$, which is a full subgroupoid of $\G$.

\begin{definition}
\label{def:rho:discrete:ordering}
Let $\G$ be a groupoid, let $\D$ be a discrete groupoid and let $\prec_k$ be a left-ordering on $\D(k,k)$, for each $k\in\Ob(\D)$.
Assume that $\Phi$ is a groupoid functor $\G \to \D$ such that $\G(x,y)\neq\emptyset \implies \Phi(x)=\Phi(y)$.
A \textbf{$\Phi$-discrete left-ordering} of $\G$ is a collection of sets
$$ P_{\mathrm{C}\G(x)} \coloneq \{ g \in \mathrm{C}\G(x) \mid 1_{\D(\Phi(x),\Phi(x))} \prec_{\Phi(x)} \Phi(g) \}. $$
If $\prec_k$ are bi-orderings, we will say that this collection of sets is a \textbf{$\Phi$-discrete bi-ordering}.
Moreover, we will say that a non-identity element is \textbf{positive} if it belongs to a such a set and that it is \textbf{negative} otherwise.
\end{definition}

\begin{definition}
\label{def:compatible:ordering}
Let $\F\bowtie\G$ be an indirect product.
A $\Phi$-discrete ordering on $\G$ is \textbf{compatible} with $\F$ if
$$ g \in P_{\mathrm{C}\G(x)} \implies g^f \in P_{\mathrm{C}\G(y)},\ \forall f \in \F(x,y),\ \forall g \in \G(-,x). $$
\end{definition}

For bi-orderability, we will focus on discrete groupoids satisfying the following condition, which generalizes the definition of a pure cloning system \cite{CloningSystemsOrderable}.
%i.e., a cloning system is pure if and only if its category $\F\bowtie\G$ is a pure indirect product.

\begin{definition}
\label{def:pure}
An indirect product $\F\bowtie\G$ is \textbf{pure} if $\G$ is a discrete groupoid and, for all $g\in\G(x,x)$ and $f\in\F(x,-)$, one has that $g \cdot f = f$.
\end{definition}

\begin{theorem}
\label{thm:main}
Given a right-Ore indirect product $\F\bowtie\G$, where $\G$ is a groupoid, we have the following.
\begin{enumerate}
    \item If $\Pi(\F,\varepsilon)$ is left-orderable and $\G$ has a $\Phi$-discrete left-ordering that is compatible with $\F$, then the group $\Pi(\F\bowtie\G,\varepsilon)$ is left-orderable.
    \item If $\F\bowtie\G$ is pure, $\Pi(\F,\varepsilon)$ is bi-orderable and $\G$ has a $\Phi$-discrete bi-ordering that is compatible with $\F$, then $\Pi(\F\bowtie\G,\varepsilon)$ is bi-orderable.
\end{enumerate}
\end{theorem}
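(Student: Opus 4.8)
The plan is to equip $H \coloneq \Pi(\F\bowtie\G,\varepsilon)$ with an explicit positive cone via a two-tier (lexicographic) rule: the $\G$-part of an element decides its sign, and when that part is trivial the sign is read off a fixed left-ordering $<_\F$ of $\Pi(\F,\varepsilon)$. I use the normal forms of \cref{rmk:normal:forms}: each $h\in H$ can be written $h=f_1\,g\,f_2^{-1}$ with $f_1\in\F(\varepsilon,x)$, $g\in\G(x,y)$, $f_2\in\F(\varepsilon,y)$; this expression is unique up to common expansion; an expansion along $q\in\F(y,-)$ replaces the triple $(f_1,g,f_2)$ by $\bigl(f_1(g\cdot q),\,g^q,\,f_2q\bigr)$ (and symmetrically on the left); and $h$ lies in the canonical copy of $\Pi(\F,\varepsilon)$ in $H$ exactly when some --- hence every --- such $g$ is an identity, equivalently (since $\Phi$ is injective on vertex groups) exactly when $\Phi(g)$ is trivial. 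For part (1) I set
$$ P \;\coloneq\; \{\, h\in H\setminus\{1\} \;:\; h=f_1\,g\,f_2^{-1}\ \text{with}\ g\ \text{positive} \,\}\ \cup\ \{\, h\in\Pi(\F,\varepsilon) \;:\; 1 <_\F h \,\}, $$
where ``positive'' in $\G$ refers to the $\Phi$-discrete left-ordering, i.e.\ $g$ positive means $1\prec_{\Phi(x)}\Phi(g)$.

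First I would check that membership in $P$ is independent of the chosen normal form, for which it suffices to handle one expansion step $g\rightsquigarrow g^q$. A direct rewriting shows the identity $(g^q)^{-1}=(g^{-1})^{g\cdot q}$. If $g$ is positive, then $g^q$ is positive directly by \cref{def:compatible:ordering}, applied to $q\in\F(y,-)$ whose source is the target of $g$; if $g$ is negative and not an identity, then $g^{-1}$ is positive, so $(g^q)^{-1}=(g^{-1})^{g\cdot q}$ is positive by compatibility, hence $g^q$ is negative; if $g$ is an identity, so is $g^q$ and the claim is about $<_\F$. In particular expansion preserves the sign of any $\G$-morphism. The same observation yields the trichotomy $H=P\sqcup\{1\}\sqcup P^{-1}$: if $h\ne1$ has non-identity $\G$-part $g$ then $h\notin\Pi(\F,\varepsilon)$ and exactly one of $g,g^{-1}$ is $\prec$-positive (as $\prec_{\Phi(x)}$ totally orders $\D(\Phi(x),\Phi(x))$ and $\Phi(g)\ne1$); if $g$ is an identity it is the trichotomy of $<_\F$ on $\Pi(\F,\varepsilon)$.

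Next I would verify $P\cdot P\subseteq P$. Given $h=f_1 g f_2^{-1}$ and $h'=f_1' g' f_2'^{-1}$ in $P$, the right-Ore property of $\F$ provides $a,b$ with $f_2a=f_1'b$, so $f_2^{-1}f_1'=ab^{-1}$; pushing $g$ past $a$ and $g'^{-1}$ past $b$ through the rewriting rules of the indirect product puts $hh'$ into a normal form whose $\G$-part is $\gamma=g^a\bigl((g'^{-1})^{b}\bigr)^{-1}$. If $g,g'$ are both positive, then $g^a$ is positive by compatibility while $(g'^{-1})^{b}$ has the same sign as $g'^{-1}$, hence is negative, so $1\preceq\Phi\bigl((g'^{-1})^{b}\bigr)^{-1}$; left-invariance of $\prec_{\Phi(-)}$ then gives $1\prec\Phi(g^a)\preceq\Phi(g^a)\Phi\bigl((g'^{-1})^{b}\bigr)^{-1}=\Phi(\gamma)$, so $hh'\in P$; the cases where $g$ or $g'$ is an identity are handled the same way, using that $<_\F$ is multiplicative on $\Pi(\F,\varepsilon)$. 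This proves part (1). For part (2), purity collapses the first tier to a quotient map: when $g\cdot f=f$ always, the $\F$-part of the product above equals $(f_1a)(f_2'b)^{-1}=f_1f_2^{-1}\cdot f_1'f_2'^{-1}$, so $h=f_1 g f_2^{-1}\mapsto f_1f_2^{-1}$ is a well-defined surjective homomorphism $\pi\colon H\to\Pi(\F,\varepsilon)$, whose kernel $K$ consists of the elements of trivial $\F$-part, i.e.\ those of the form $e\,\gamma\,e^{-1}$ with $\gamma$ a vertex element of $\G$. Then $\Pi(\F,\varepsilon)$ is bi-orderable by hypothesis; $K$ is bi-orderable via ``$e\gamma e^{-1}$ positive iff $\gamma$ positive'', well defined since compatibility makes each expansion $g\mapsto g^q$ an order-embedding of the vertex groups (apply it to $g_1^{-1}g_2$, using that $(\cdot)^q$ is a homomorphism there in the pure case); and this bi-ordering of $K$ is invariant under conjugation by all of $H$, because conjugation acts on $\G$-parts through such order-embeddings and through conjugations within the groups $\G(x,x)$, which preserve $\prec_{\Phi(x)}$ since it is a bi-ordering and $\Phi$ is a functor. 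The standard criterion for bi-orderability of group extensions then shows $H$ is bi-orderable, ordered lexicographically with $\Pi(\F,\varepsilon)$ on the outside and $K$ on the inside.

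The main obstacle, in both parts, is the bookkeeping forced by the indirect product: reading off the $\G$-part of a product --- and, for part (2), of a conjugate --- from the Ore rewriting, and checking that the two ways \cref{def:compatible:ordering} gets invoked (directly on a positive element, and via the identity $(g^q)^{-1}=(g^{-1})^{g\cdot q}$ on a negative one) together cover every representative. Compatibility is engineered precisely so that the move $g\mapsto g^q$ is order-neutral, which is what makes the two-tier rule well defined and multiplicative; purity is what turns the first tier into a genuine quotient homomorphism, thereby upgrading left-orderability to bi-orderability.
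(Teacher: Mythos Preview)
Your proof is correct and follows essentially the same route as the paper: the same lexicographic positive cone (the paper's display $(\star)$), the same composition formula to extract the $\G$-factor of a product (\cref{rmk:composition}), and for part~(2) the same short exact sequence $K\hookrightarrow\Pi(\F\bowtie\G)\twoheadrightarrow\Pi(\F)$ together with the standard extension criterion for bi-orderability. The only differences are organizational: you check well-definedness of the cone directly via the identity $(g^q)^{-1}=(g^{-1})^{g\cdot q}$ where the paper argues by contradiction in \cref{lem:normal:forms}, and in part~(2) you bypass the auxiliary quotient $\G^*$ by noting at once that purity makes $f_1gf_2^{-1}\mapsto f_1f_2^{-1}$ a homomorphism and $(\cdot)^q$ a group homomorphism on vertex groups, whereas the paper carries out the explicit conjugation computation of \cref{prop:K:order}.
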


This recovers Ishida's result from \cite{CloningSystemsOrderable} and has the additional following implication about groups of braided fractions of digit rewriting systems.

\begin{corollary}
\label{cor:applications}
The group of fractions of a digit rewriting system is left-orderable and the group of purely braided fractions of a digit rewriting system is bi-orderable.
In particular, the braided Houghton groups are left-orderable.
\end{corollary}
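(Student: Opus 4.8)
The plan is to apply \cref{thm:main} with $\F$ the Ore category attached to the digit rewriting system and $\G$ the associated groupoid of braids (resp.\ pure braids) on the digits of finite words. The first hypothesis on the factor $\F$ comes for free: $\Pi(\F,\varepsilon)$ is a diagram group, and diagram groups are known to be bi-orderable, in particular left-orderable. So in each of the two cases it suffices to equip $\G$ with a $\Phi$-discrete ordering compatible with $\F$ in the sense of \cref{def:compatible:ordering}. In both cases we would take $\Ob(\D)=\mathbb N$, the groups $\D(n,n)$ to be braid groups $B_n$ (resp.\ pure braid groups $P_n$), and $\Phi\colon\G\to\D$ to send a word to its number of digits and a braid to its underlying braid; since $\G(w,w')\neq\emptyset$ precisely when $|w|=|w'|$, the condition $\G(x,y)\neq\emptyset\Rightarrow\Phi(x)=\Phi(y)$ holds, and any discrete ordering of $\D$ then induces a $\Phi$-discrete ordering of $\G$. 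The real content is the compatibility condition, which unwinds as follows: for an expansion $f\in\F(x,y)$ the element $g^f$ is the image of $g$ under the \emph{cabling homomorphism} $B_{|x|}\to B_{|y|}$ (resp.\ $P_{|x|}\to P_{|y|}$) associated with $f$ — the one replacing each strand by the parallel bundle of strands indexed by the word into which $f$ expands that digit — so compatibility says exactly that every cabling homomorphism sends positive elements to positive elements.

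For the left-orderable statement, we would give each $B_n$ Dehornoy's left-ordering (with the orientation making block half-twists positive) and check compatibility from the $\sigma$-positive calculus. Let $\phi\colon B_n\to B_m$ be a cabling and let $M_k$ be the number of strands preceding the $k$-th block. Then $\phi$ carries the parabolic subgroup $\langle\sigma_{k+1},\dots,\sigma_{n-1}\rangle$ into the subgroup of $B_m$ generated by the $\sigma_j$ with $j\ge M_k+m_k+1$, and carries $\sigma_k$ to a positive braid word whose smallest-index letter is $\sigma_{M_k+1}$. Hence, if $g$ is $\sigma_k$-positive, say $g=u_0\sigma_ku_1\cdots\sigma_ku_\ell$ with $\ell\ge 1$ and each $u_i\in\langle\sigma_{k+1},\dots,\sigma_{n-1}\rangle$, then $\phi(g)=\phi(u_0)\phi(\sigma_k)\phi(u_1)\cdots\phi(\sigma_k)\phi(u_\ell)$ is a word in which $\sigma_{M_k+1}$ occurs, occurs only positively, and is the smallest-index letter present, so $\phi(g)$ is $\sigma_{M_k+1}$-positive, hence positive. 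Since the Dehornoy positive cone is the union of the $\sigma_k$-positive braids over all $k$, every cabling is positivity-preserving, and \cref{thm:main}(1) gives left-orderability. In particular, each braided Houghton group, being a group of braided fractions of a digit rewriting system, is left-orderable.

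For the bi-orderable statement, we would take $\G$ to be the groupoid of \emph{pure} braids on digits; this is a discrete groupoid, and since a pure braid permutes no digits it acts trivially on expansions ($g\cdot f=f$), so $\F\bowtie\G$ is pure in the sense of \cref{def:pure}. What remains — the main obstacle — is to produce bi-orderings $\prec_n$ of all the pure braid groups $P_n$ that are simultaneously compatible with every cabling, i.e.\ such that each $\phi\colon P_n\to P_m$ sends $\prec_n$-positive elements to $\prec_m$-positive ones. The natural strategy is to build the $\prec_n$ from the lower central series of the $P_n$ — which every cabling respects, and relative to which every cabling is split injective, a retraction being the homomorphism (well defined on pure braids) that forgets all but one strand of each block — by fixing a coherent rule for ordering the graded quotients that keeps the induced maps monotone, then declaring an element positive according to the leading term of its Magnus expansion. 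Checking that one such choice can be made to serve all cablings at once is exactly the computation generalizing Ishida's treatment of pure cloning systems \cite{CloningSystemsOrderable}; granting it, \cref{thm:main}(2) yields bi-orderability of the group of purely braided fractions, and Ishida's theorem is recovered as the one-caret case.

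The left-orderable half is essentially the observation that cabling respects the $\sigma$-ordering on braid groups; the genuine difficulty is the last step — exhibiting a single coherent family of bi-orderings of the pure braid groups that survives all cablings. For the full (non-pure) braided versions no analogous bi-orderability should be expected, since they typically contain the non-bi-orderable braid groups $B_k$.
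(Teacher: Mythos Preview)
Your approach matches the paper's: apply \cref{thm:main} with the Dehornoy ordering on $\mathrm{B}_n$ for the left-orderable case and the Magnus-type ordering on pure braid groups for the bi-orderable case, checking in each case that cabling preserves positivity. Two points where the paper is sharper than your sketch deserve mention.

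First, the bi-ordering compatibility you flag as ``the main obstacle'' and leave unproven is not left open in the paper: it simply cites \cite[Lemma~3.1]{BraidedBiOrderable}, which shows that replacing one strand of a pure braid by two preserves positivity in the Magnus ordering; iterating $k-1$ times handles a general single-rewriting cabling, and products of rewritings follow from the indirect-product axioms. No new coherence argument across all $n$ simultaneously is needed beyond that lemma. Second, your assertion that each braided Houghton group \emph{is} a group of braided fractions of a DRS is not what the paper establishes. Rather, it shows that the braided Houghton group \emph{embeds} into $\Pi(\FH_n\bowtie\B,\,y_1\cdots y_n)$ as $\Pi(\FH_n\bowtie\BH)$, where $\BH\subsetneq\B$ is the wide subgroupoid generated by the moves of \cref{fig:braided:houghton:generators} (digital braids in which the $x$-strands can always be drawn above the $y_i$-strands). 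The ambient group is strictly larger---e.g.\ the digital braid in \cref{fig:new:braided:houghton} does not lie in $\BH$---so your identification fails, though left-orderability of the subgroup of course still follows.
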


This will be shown throughout \cref{sub:left:orderable:examples,,sub:left:orderable:braided:houghton}.

\begin{question}
Can the results of \cref{cor:applications} be extended to the case of more general string rewriting systems (which model braided versions of diagram groups)?
\end{question}

Note that, for $\F$ a generic string rewriting system, the Ore category generated by $\F$ and $\G$ is generally not an indirect product.

%%%%%%%%%%%%%%%%%%%%%%%%%

\section{Background and main definitions}
\label{sec:background}

This section very briefly introduces the relevant background.

\subsection{Orderable groups}

A group is \textbf{left-orderable} if there exists a linear order on $G$ that is invariant under left multiplication, and it is \textbf{bi-orderable} if the linear order is invariant under both left and right multiplication.

Equivalently, a group $G$ is left-orderable if and only if it has a \textbf{positive cone}, which is a subset $P \subseteq G$ such that:
(1)~$P \cdot P \subseteq P$,
(2)~$P \cap P^{-1} = \emptyset$,
(3)~$P \cup P^{-1} = G \setminus \{1\}$.
If the positive cone $P$ is invariant under conjugacy, then it $G$ is bi-orderable.

See for example \cite{OrderableGroups} for more information on left- and bi-orderable groups.

\subsection{Fundamental groups of right-Ore categories}
\label{sub:Ore:cat}

Let us briefly recall the basic definitions about fundamental groups of right-Ore categories and indirect products.
We refer to \cite{Witzel} for all the details.
As done in \cite{Witzel}, we will write $\C(x,y)$ to denote the set of morphisms of $\C$ from $y$ to $x$.

A category $\C$ has \textit{common right multiples} if, $\forall a_1, a_2 \in \F(x,-)$, $\exists b_1, b_2 \in \F$ such that $a_1b_1=a_2b_2$.
It is \textit{cancellative} if $ab=ac \implies b=c$ and $ab=cb \implies a=c$.
The category $\C$ is \textbf{right-Ore} if it has common right multiples and is cancellative.
For more detail, see \cite[Subsection 1.3]{Witzel}.

If $\F$ and $\G$ are subcategories of $\C$, then $\C$ is an \textbf{indirect product} $\F\bowtie\G$ if, $\forall f\in\F(x,-), g\in\G(-,x)$, $\exists! f'\in\F, g'\in\G$ for which $gf=f'g'$.
Writing $f'=g \cdot f$ and $g'=g^f$ determines actions of $\G$ and $\F$ on each other.
Indirect products are also defined externally by actions satisfying certain properties.
For more detail, see \cite[Section 4]{Witzel}.
The following is an excerpt from \cite[Proposition 4.5]{Witzel}

\begin{lemma}
\label{lem:product}
An indirect product $\F\bowtie\G$ is right-Ore if $\G$ is a groupoid and $\F$ is right-Ore, has no non-trivial invertibles and has injective action on $\G$.
\end{lemma}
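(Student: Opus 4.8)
The plan is to verify directly the two conditions defining a right-Ore category for $\C\coloneq\F\bowtie\G$: that $\C$ is cancellative, and that it has common right multiples. The main tool is the fact, intrinsic to the indirect product structure (see \cite{Witzel}), that every morphism of $\C$ has a unique normal form $fg$ with $f\in\F$ and $g\in\G$; it is the uniqueness here that relies on $\F$ having no non-trivial invertibles. The second ingredient is the exchange relation $gf=(g\cdot f)(g^f)$ and its formal consequences: since $\G$ is a groupoid, the identity $g^{-1}\cdot(g\cdot f)=f$ together with its mirror shows that, for every invertible $g$, the map $f\mapsto g\cdot f$ is a bijection between the appropriate hom-sets of $\F$.

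For cancellativity I would first reduce to multiplication by morphisms of $\F$. Writing a given $c\in\C$ as $f_c g_c$ in normal form, both left- and right-multiplication by $c$ decompose into multiplication by $f_c$ and by the $\G$-morphism $g_c$; the latter is a bijection on $\C$ because $g_c$ is invertible in $\G$, so it suffices to treat an arbitrary $f_0\in\F$. Given $\alpha=f_\alpha g_\alpha$ and $\beta=f_\beta g_\beta$ in normal form, one has $f_0\alpha=(f_0 f_\alpha)g_\alpha$ and $f_0\beta=(f_0 f_\beta)g_\beta$ already in normal form, so $f_0\alpha=f_0\beta$ forces $g_\alpha=g_\beta$ and, after left-cancelling $f_0$ in $\F$, $f_\alpha=f_\beta$; this handles left-cancellativity. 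For right-cancellativity the exchange relation produces the normal forms $\alpha f_0=\big(f_\alpha(g_\alpha\cdot f_0)\big)g_\alpha^{f_0}$ and $\beta f_0=\big(f_\beta(g_\beta\cdot f_0)\big)g_\beta^{f_0}$, so $\alpha f_0=\beta f_0$ yields $g_\alpha^{f_0}=g_\beta^{f_0}$, hence $g_\alpha=g_\beta$ by injectivity of the action of $\F$ on $\G$, and then $f_\alpha=f_\beta$ after right-cancelling $g_\alpha\cdot f_0$ in $\F$. This is the one step where the injectivity hypothesis is essential.

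For common right multiples, take $a_1,a_2\in\C$ with a common codomain and write $a_i=f_i g_i$ in normal form, so $f_1$ and $f_2$ have the same codomain. Since $\F$ is right-Ore, choose $p_1,p_2\in\F$ with $f_1 p_1=f_2 p_2$; since $g_i$ is invertible, the map $f\mapsto g_i\cdot f$ is surjective, so choose $h_i\in\F$ with $g_i\cdot h_i=p_i$. The exchange relation then gives $a_i h_i=(f_i p_i)g_i^{h_i}$, whose $\F$-parts already coincide. It remains to post-compose $h_i$ with morphisms $k_i\in\G$ so that the $\G$-parts agree as well; this is possible because $\G$ is a groupoid and the intermediate objects match up (both equal the common domain of $p_1$ and $p_2$), and then $b_i\coloneq h_i k_i$ satisfy $a_1 b_1=a_2 b_2$.

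I expect the main obstacle to be bookkeeping rather than conceptual: the exchange relation and each cocycle identity carry source and target constraints, so one must track the objects involved to check that every composite written above is defined and that the morphisms $k_i$ can be chosen with matching domain and codomain. Once this normal-form calculus is set up, each of the three verifications is a short manipulation; the only input beyond the hypotheses on $\F$ is that $\G$ is a groupoid, used to invert $\G$-parts and to turn the maps $f\mapsto g\cdot f$ into bijections.
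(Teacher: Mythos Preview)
The paper does not actually prove this lemma: it is stated as an excerpt of \cite[Proposition~4.5]{Witzel}, with no argument given. Your proposal is therefore not a reconstruction of the paper's proof but a self-contained verification, and as such it is correct. The two checks you outline---cancellativity via uniqueness of the normal form $fg$ together with cancellativity in $\F$ and injectivity of $g\mapsto g^{f}$, and common right multiples by first finding $p_1,p_2$ in $\F$ and then lifting through the bijection $f\mapsto g_i\cdot f$---are exactly the expected steps, and the object-bookkeeping you flag as the main concern does work out (both $g_i^{h_i}$ land in $\G(z',-)$ for the common domain $z'$ of the $p_i$, so a suitable $k_i\in\G$ exists). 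One small remark: the hypothesis that $\F$ has no non-trivial invertibles is not really what gives uniqueness of the normal form in your argument---in the external construction of $\F\bowtie\G$ normal forms are unique by definition---so you may want to rephrase that sentence; the hypothesis is used in \cite{Witzel} for a more structural reason (controlling $\F\cap\G$), but this does not affect the validity of your verification.
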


We will write $\Pi(\C,\varepsilon)$ for the fundamental group of $\C$ based at an object $\varepsilon$.
When the choice of basepoint is irrelevant, we will often omit it and simply write $\Pi(\C)$.

\begin{remark}
\label{rmk:normal:forms}
Since $\F\bowtie\G$ is a right-Ore category, it has common right-multiples, so every element of the fundamental group $\Pi(\F\bowtie\G)$ can be written as $ab^{-1}$ for some $a,b\in\F\bowtie\G(x,-)$ and some $x \in \Ob(\F\bowtie\G)$ (see Theorem 1.6 of \cite{Witzel} and the discussion that follows it).
Since $a=f_1g_1$ and $b=f_2g_2$ for $f_1,f_2\in\F$ and $g_1,g_2\in\G$, we then have $ab^{-1}=f_1gf_2^{-1}$, where $g=g_1g_2$.
Thus, every element of $\Pi(\F\bowtie\G,\varepsilon)$ can be written as $f_1gf_2^{-1}$ for some $f_1,f_2\in\F$ and $g\in\G$.
We will write $TgS^{-1}$, where $S\in\F(\varepsilon,y)$, $T\in\F(\varepsilon,x)$ and $g\in\G(x,y)$.
\end{remark}

\begin{remark}
\label{rmk:composition}
The composition of two elements $TgS^{-1}$ and $T'hS'^{-1}$ is
$$TgS^{-1} T'hS'^{-1} = TgB B^{-1}S^{-1} T'A A^{-1} hS'^{-1},$$
where $A,B \in \F$ are such that $T'A = SB$ and exist by $\F$ having common right-multiples.
The previous equation continues as follows:
\begin{gather*}
TgB B^{-1}S^{-1} T'A A^{-1} hS'^{-1} = TgB(SB)^{-1}(T'A) A^{-1}hS'^{-1} = TgBA^{-1}hS'^{-1} =\\
=TgB(h^{-1}A)^{-1}S'^{-1} = T(g\cdot B) g^B [(h^{-1}\cdot A) (h^{-1})^A]^{-1} S'^{-1} =\\
= T (g\cdot B) g^B [(h^{-1})^A]^{-1} (h^{-1}\cdot A)^{-1} S'^{-1}.
\end{gather*}
\end{remark}

%%%%%%%%%%%%%%%%%%%%%%%%%

\section{Proof of \texorpdfstring{\cref{thm:main}}{Theorem 1.5}}

Throughout this section, we assume the hypotheses of point (1) of \cref{thm:main}.
\cref{sub:left:order,,sub:bi:order} feature proofs of the two points of \cref{thm:main}.

\begin{remark}
Since $\Phi$ is a groupoid functor, the following facts are readily checked:
\begin{itemize}
    \item if $g\in\mathrm{C}\G(x)$ is not an identity, then: $g \in P_{\mathrm{C}\G(x)} \iff g^{-1} \not\in P_{\mathrm{C}\G(x)}$;
    \item the identities of $\mathrm{C}\G(x)$ do not belong to $P_{\mathrm{C}\G(x)}$;
    \item $g,h \in P_{\mathrm{C}\G(x)} \implies gh \in P_{\mathrm{C}\G(x)}$.
\end{itemize}
\end{remark}

\subsection{Left-orderability}
\label{sub:left:order}

To start, we need the following fact.

\begin{lemma}
\label{lem:normal:forms}
If $TgS^{-1} = T'hS'^{-1}$, then $g$ and $h$ have the same sign in $\G$ (i.e., either both $g$ and $h$ are positive, both are negative or both are identities).
\end{lemma}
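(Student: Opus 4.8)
The plan is to reduce the claim to a statement about a single ``elementary'' equality between two normal forms $TgS^{-1} = T'hS'^{-1}$, and then to push everything through the functor $\Phi$ to land in the discrete groupoid $\D$, where signs are detected by the fixed orderings $\prec_k$. More precisely, I would first recall (as in \cref{rmk:normal:forms} and the cited Theorem~1.6 of \cite{Witzel}) that any two normal-form expressions for the same element of $\Pi(\F\bowtie\G,\varepsilon)$ are related by a finite chain of moves, each of which replaces $(T,g,S)$ by $(Ta, g^{a'}{}\ldots, Sa')$ — i.e., by simultaneously expanding $T$ and $S$ along morphisms of $\F$ coming from a common right-multiple, and correspondingly twisting $g$ by the $\F$-action on $\G$. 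So it suffices to show that such an elementary move preserves the sign of $g$.

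For a single move, I would write the relation explicitly: if $TgS^{-1} = T'hS'^{-1}$ is an elementary equality, then there are $a, b \in \F$ with $Ta = T'b$ (common right-multiples), $S$ and $S'$ get extended compatibly, and $h$ is obtained from $g$ by the action of an $\F$-morphism, i.e., $h = g^{f}$ for some $f \in \F(x,y)$ (after possibly also passing through $\G$-identities, which have a well-defined sign, namely ``identity''). The key input is then precisely the compatibility hypothesis of \cref{def:compatible:ordering}: $g \in P_{\mathrm{C}\G(x)} \implies g^f \in P_{\mathrm{C}\G(y)}$. Applying this to $g$ and to $g^{-1}$ (using that $(g^{-1})^f = (g^f)^{-1}$, since $\F$ acts on $\G$ by groupoid morphisms, which should be recorded or is immediate from the axioms in \cite{Witzel}) gives that $g$ is positive iff $g^f$ is positive, and hence negative iff $g^f$ is negative; and $g$ is an identity iff $g^f$ is (the action sends identities to identities). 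So the sign is preserved by each elementary move, and therefore along the whole chain, which proves the lemma.

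The step I expect to be the main obstacle is making precise the ``elementary move'' decomposition of an equality of normal forms, and in particular checking that in each move the $\G$-part is genuinely transformed by a pure $\F$-action $g \mapsto g^f$ without any stray $\G$-multiplication creeping in — because $\G$-multiplication need \emph{not} preserve sign across different connected components, and even within a component it only interacts well with $P_{\mathrm{C}\G(x)}$ via the already-noted multiplicativity. I would handle this by appealing directly to the normal-form/uniqueness machinery of \cite[Section~4 and Theorem~1.6]{Witzel}: two normal forms $f_1 g f_2^{-1}$ and $f_1' h f_2'^{-1}$ of the same element differ by right-multiplying the numerator and denominator by morphisms of $\F\bowtie\G$ and then renormalizing via \cref{rmk:composition}; tracking the $\G$-component through that renormalization, the only operations applied to $g$ are the action $(-)^{(-)}$ by $\F$ and multiplication by $\G$-identities, which is exactly what is needed. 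Once that bookkeeping is in place, the sign-invariance follows formally from compatibility as above.
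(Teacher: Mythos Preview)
Your strategy is viable but takes a longer road than necessary, and has two small issues. First, the identity $(g^{-1})^f = (g^f)^{-1}$ is \emph{not} a consequence of the indirect-product axioms; what one actually obtains from $g^{-1}gf = f$ is $(g^{-1})^{g\cdot f} = (g^f)^{-1}$. Your argument can be patched since compatibility is quantified over all $f\in\F(x,-)$, but the claim as stated is false. Second, you never actually carry out the ``elementary move'' bookkeeping you yourself flag as the main obstacle. It does work out: from $(Tg)e = (T'h)f$ and $Se = S'f$ with $e = e_\F e_\G$, $f = f_\F f_\G$, one extracts $e_\G = f_\G$ and $g^{e_\F} = h^{f_\F}$, so $g$ and $h$ become literally equal after acting by $\F$. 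But you assert this rather than verify it.

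The paper sidesteps this entire analysis with a one-line trick: compute $TgS^{-1}\cdot(T'hS'^{-1})^{-1} = TgS^{-1}\cdot S'h^{-1}T'^{-1}$ using the composition formula of \cref{rmk:composition}. Its $\G$-factor is $g^B(h^A)^{-1}$ for suitable $A,B\in\F$, and since the product is the identity this $\G$-factor must itself be an identity. If, say, $g$ were positive and $h$ negative, then compatibility makes $g^B$ and $(h^A)^{-1}$ both positive, so their product is positive --- contradiction. The remaining mismatched cases are symmetric. This avoids any structural analysis of the fraction equivalence relation: the composition formula already encodes exactly the $\F$-action you were trying to isolate.
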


\begin{proof}
The identity in $\Pi(\F\bowtie\G)$ is $TgS^{-1} (T'hS'^{-1})^{-1} = TgS^{-1} S'h^{-1}T'^{-1}$.
The factor in $\G$ is $g^B (h^A)^{-1}$ by \cref{rmk:composition}, so $g^B (h^A)^{-1}$ must be an identity.
Let us assume by contradiction that $g$ and $h^{-1}$ are positive.
Then, by compatibility of the $\Phi$-discrete ordering, $(h^A)^{-1}$ is positive, so $g^B (h^A)^{-1}$ is positive.
Then the identity $g^B (h^A)^{-1}$ is positive, which is a contradiction.
\end{proof}

Consider the following subset of $\Pi(\F\bowtie\G)$:
\begin{equation}
\tag{$\star$}
\label{eq:cone}
P \coloneq \{ T g S^{-1} \mid g \text{ is positive} \} \cup \{ TS^{-1} \mid 1 <_\F TS^{-1} \}.
\end{equation}
Let us show that $P$ is a positive cone.

(1)
The product of two elements $TgS^{-1}$ and $T'hS'^{-1}$ of $P$ is as described in \cref{rmk:composition}, so let us consider the factor in $\G$, which is $g^B [(h^{-1})^A]^{-1}$.
Since $g,h$ are positive, by compatibility we have that $g^B$ and $[(h^{-1})^A]^{-1}$ are positive.
It follows that $g^B [(h^{-1})^A]^{-1}$ is positive, as needed.

(2)
Assume that an element belongs to both $P$ and $P^{-1}$.
Since it belongs to $P$, it can be written as $TgS^{-1}$ with $g$ positive or as $TS^{-1} >_\F 1$.
On the other hand, since it belongs to $P^{-1}$ it can be written as $T'hS'^{-1}$ with $h$ negative or as $T'S'^{-1} <_\F 1$.
By \cref{lem:normal:forms}, this is impossible.

(3)
By \cref{rmk:normal:forms}, every element of $\Pi(\F\bowtie\G)$ can be written as some $TgS^{-1}$.
If $g=1$, then $TgS^{-1}=TS^{-1} \neq 1$ belongs to $P$ or $P^{-1}$, depending on whether $1 <_\F TS^{-1}$ or $TS^{-1} <_\F 1$.
If $g \neq 1$, then $TgS^{-1}$ belongs to $P$ or $P^{-1}$, depending on whether $g$ is positive or negative.
Ultimately, $TgS^{-1}\in P \cup P^{-1}$, as needed.

Then $\Pi(\F\bowtie\G)$ is left-orderable, which is the first point of \cref{thm:main}.

\subsection{Bi-orderability}
\label{sub:bi:order}

For an indirect product $\F\bowtie\G$ with $\G$ discrete, consider the relation on $\G$ which relates elements of $\G(y,y)$ with the same action on $\F(y,-)$:
$$ g \sim h \ \iff \ \exists y\in\Ob(\G) \mid g,h\in\G(y,y) \ \wedge \ \forall f \in \F(y,-),\ g \cdot f = h \cdot f, $$

The relation $\sim$ is a congruence relation on $\G$.
Indeed, given $g_1 \sim g_2$ and $h_1 \sim h_2$, let us show that $g_1h_1 \sim g_2h_2$.
First note that related elements must have the same source and target, so $g_1,g_2\in\G(y,y)$ and $h_1,h_2\in\G(y,z)$ for some $y,z\in\Ob(\F\bowtie\G)$.
For any $f\in\F(z,-)$, we can readily compute the desired identity:
$$ (g_1h_1) \cdot f = g_1 \cdot (h_1 \cdot f) = g_1 \cdot (h_2 \cdot f) = g_2 \cdot (h_2 \cdot f) = g_2h_2 \cdot f. $$
This allows us to consider the quotient category $\G^* \coloneq \G/\sim$.

The projection $\G \to \G^*,\ g \mapsto g^*$ induces a surjective group homomorphism
$$\Psi \colon \Pi(\F\bowtie\G) \to \Pi(\F\bowtie\G^*),\ TgS^{-1} \mapsto Tg^*S^{-1}.$$

\begin{remark}
\label{rmk:pure}
Observe that, by its very \cref{def:pure}, the indirect product is pure if and only if $\G^*$ is trivial, or equivalently $\Psi\left(\Pi(\F\bowtie\G)\right) = \Pi(\F)$.
\end{remark}

Let $K=\mathrm{Ker}(\Psi)$, which is the subgroup of $\Pi(\F\bowtie\G)$ consisting of all the products $fgf^{-1}$ such that $g$ acts trivially on $\F$.

\begin{lemma}
\label{lem:semidirect}
If the indirect product between $\F$ and $\G$ is pure, then the group $\Pi(\F\bowtie\G)$ decomposes as a semidirect product $K \rtimes \Pi(\F)$.
\end{lemma}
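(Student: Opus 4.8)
The plan is to exhibit an explicit splitting of the short exact sequence
$$ 1 \to K \to \Pi(\F\bowtie\G) \xrightarrow{\Psi} \Pi(\F\bowtie\G^*) \to 1. $$
By \cref{rmk:pure}, purity tells us precisely that $\G^*$ is trivial, so $\Pi(\F\bowtie\G^*) = \Pi(\F\bowtie\{1\}) = \Pi(\F)$; thus the quotient we must map back into $\Pi(\F\bowtie\G)$ is just $\Pi(\F)$. First I would check that the inclusion of $\F$ into $\F\bowtie\G$ (sending a morphism $f$ to itself, viewed as $f\cdot 1_{\G}$) induces a group homomorphism $\iota\colon\Pi(\F)\to\Pi(\F\bowtie\G)$, which is routine since $\F$ is a subcategory and fundamental group constructions are functorial in the category. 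Concretely $\iota$ sends the class $TS^{-1}$ of $\Pi(\F)$ to the element $T\,1_{\G}\,S^{-1}$ of $\Pi(\F\bowtie\G)$ in the notation of \cref{rmk:normal:forms}.

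The key computation is that $\iota$ is a section of $\Psi$, i.e. $\Psi\circ\iota = \mathrm{id}_{\Pi(\F)}$. This is immediate from the formula $\Psi(TgS^{-1}) = Tg^*S^{-1}$: applying it to $T\,1_{\G}\,S^{-1}$ gives $T\,(1_{\G})^*\,S^{-1} = T\,1_{\G^*}\,S^{-1}$, and since $\G^*$ is trivial this is exactly the element $TS^{-1}$ of $\Pi(\F) = \Pi(\F\bowtie\G^*)$ we started with. Having a section, the standard fact about split short exact sequences gives the internal semidirect product decomposition $\Pi(\F\bowtie\G) = K \rtimes \iota(\Pi(\F)) \cong K \rtimes \Pi(\F)$, with $K$ normal (being a kernel) and $K \cap \iota(\Pi(\F)) = 1$ and $K\cdot\iota(\Pi(\F)) = \Pi(\F\bowtie\G)$.

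The one point that genuinely needs care — and which I expect to be the main (though still modest) obstacle — is verifying that $\iota$ is well-defined and injective: that the assignment on morphisms respects the relations defining the fundamental group, and that distinct elements of $\Pi(\F)$ are not identified in $\Pi(\F\bowtie\G)$. Well-definedness follows because $\F \hookrightarrow \F\bowtie\G$ is a functor and fundamental groups are functorial, so it descends to groups; one should note that common right multiples in $\F$ remain common right multiples in $\F\bowtie\G$, so the description of composition in \cref{rmk:composition} applied to elements of the form $T\,1_{\G}\,S^{-1}$ collapses (since $1_{\G}\cdot B = B$, $1_{\G}^B = 1_{\G}$, etc.) to exactly the composition rule in $\Pi(\F)$ — this simultaneously shows $\iota$ is a homomorphism and that it is injective, because the existence of the retraction $\Psi$ with $\Psi\circ\iota=\mathrm{id}$ already forces injectivity. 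Assembling these observations, $\Pi(\F\bowtie\G)\cong K\rtimes\Pi(\F)$, completing the proof.
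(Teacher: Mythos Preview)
Your proposal is correct and follows essentially the same approach as the paper: use \cref{rmk:pure} to identify the quotient with $\Pi(\F)$, then observe that the natural inclusion $\Pi(\F)\to\Pi(\F\bowtie\G)$, $TS^{-1}\mapsto TS^{-1}$, is a section of $\Psi$, yielding the splitting. Your extra discussion of well-definedness and injectivity of $\iota$ is more explicit than the paper (which simply asserts the section exists), but as you yourself note, injectivity is automatic from $\Psi\circ\iota=\mathrm{id}$, so this is elaboration rather than a different argument.
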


\begin{proof}
Since the indirect product is pure, the following is a short exact sequence
$$K \xhookrightarrow{\mathrm{i}} \Pi(\F\bowtie\G) \xtwoheadrightarrow{\Psi} \Pi(\F)$$
by \cref{rmk:pure}, where $\mathrm{i}$ denotes the inclusion of $K$ into $\Pi(\F\bowtie\G)$.
The map $\Pi(\F) \rightarrow \Pi(\F \bowtie \G)$ sending each $TS^{-1}$ to itself is a section, so we are done.
\end{proof}

\begin{proposition}
\label{prop:K:order}
If $\G$ has a $\Phi$-discrete bi-ordering that is compatible with $\F$, then $K$ inherits a bi-ordering from that of $\G$.
\end{proposition}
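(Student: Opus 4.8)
The plan is to equip $K$ with an explicit positive cone built from the $\Phi$-discrete bi-ordering of $\G$, and then to verify that this cone is conjugacy-invariant.

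First I would describe $K$ concretely. Since $\G$ is discrete and $\F\bowtie\G$ is pure, $\G^*$ is trivial (\cref{rmk:pure}), so $\Psi(TgS^{-1}) = TS^{-1} \in \Pi(\F)$; as $\F$ is cancellative, $TS^{-1} = 1$ forces $T = S$. Hence every element of $K$ has the form $TgT^{-1}$ with $T\in\F(\varepsilon,x)$ and $g\in\G(x,x)$, and $TgT^{-1} = 1$ if and only if $g = 1$ (again by cancellativity). The one structural fact I need is a ``common denominator'' statement: given $T_1gT_1^{-1}$ and $T_2hT_2^{-1}$ in $K$, choose, using common right multiples in $\F$, a factorization $T_1A_1 = T_2A_2 =: W$; then purity gives $g\cdot A_1 = A_1$, whence $T_1gT_1^{-1} = Wg^{A_1}W^{-1}$, and similarly $T_2hT_2^{-1} = Wh^{A_2}W^{-1}$, with $g^{A_1},h^{A_2}$ lying in a single group $\G(z,z)$. (Concretely, this realizes $K$ as a direct limit of the groups $\G(x,x)$ along the transition maps $g\mapsto g^A$.) Throughout I will use that each $\G(x,x)$, equipped with the set $P_{\mathrm{C}\G(x)}$, is itself a bi-ordered group: this follows from the remark preceding \cref{sub:left:order} together with $\prec_{\Phi(x)}$ being a bi-ordering.

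Then I would set $P_K \coloneq \{TgT^{-1}\in K \mid g \text{ is positive}\}$. Well-definedness is immediate from \cref{lem:normal:forms}. For the positive-cone axioms I bring the relevant elements to a common denominator $W$ as above, which reduces everything to the single group $\G(z,z)$: closure under products and $P_K\cap P_K^{-1} = \emptyset$ follow from the corresponding properties of $P_{\mathrm{C}\G(z)}$, using compatibility of the $\Phi$-discrete ordering to pass from $g$ positive to $g^{A}$ positive; totality $P_K\cup P_K^{-1} = K\setminus\{1\}$ uses $TgT^{-1}\neq 1 \Rightarrow g\neq 1$ together with the trichotomy in $\G(z,z)$.

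The crux is conjugacy-invariance of $P_K$. By \cref{lem:semidirect}, $\Pi(\F\bowtie\G)$ is generated by $K$ and the image of $\Pi(\F)$, so it suffices to conjugate a positive $w = T_0gT_0^{-1}$ by an element of $K$ and by an element $TS^{-1}$. Conjugating by $k = UhU^{-1}\in K$: bringing $w$ and $k$ to a common denominator $W$ turns $kwk^{-1}$ into $W\bigl(h^{A'} g^{A} (h^{A'})^{-1}\bigr)W^{-1}$ with $g^{A},h^{A'}\in\G(z,z)$ and $g^{A}$ positive; since $P_{\mathrm{C}\G(z)}$ is conjugacy-invariant in $\G(z,z)$ (this is where the bi-ordering hypothesis on $\G$, rather than mere left-orderability, is essential), the conjugate is positive. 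Conjugating by $TS^{-1}$ with $T,S\in\F(\varepsilon,y)$: writing $S^{-1}T_0 = AB^{-1}$ in the groupoid of fractions of $\F$ and using purity (so that $B^{-1}gB = g^B$) gives $(TS^{-1})(T_0gT_0^{-1})(TS^{-1})^{-1} = (TA)\,g^B\,(TA)^{-1}$, which is positive by compatibility. Thus $P_K$ is invariant under conjugation by all of $\Pi(\F\bowtie\G)$; in particular $K$ is bi-orderable, as claimed. (The stronger invariance is also exactly what is needed afterwards to bi-order the semidirect product $K\rtimes\Pi(\F)$.)

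The obstacle I anticipate is organizational rather than deep: keeping track of sources and targets and systematically using purity to kill the actions $g\cdot A$, so that each verification can be pushed into a single component group $\G(z,z)$. The one genuinely new hypothesis used beyond the left-orderable case is that $\G$ is bi-ordered, entering precisely in the conjugation-by-$K$ step; a small point worth flagging is that $P_{\mathrm{C}\G(x)}$ is a genuine positive cone on $\G(x,x)$ only because $\Phi$ is injective there, which is implicit in the remark preceding \cref{sub:left:order}.
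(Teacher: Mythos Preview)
Your proposal is correct and follows essentially the same strategy as the paper: define $P_K$ via positivity of the $\G$-factor and then verify conjugation-invariance. The difference is in execution. The paper computes $(fgf^{-1})(ThT^{-1})(fgf^{-1})^{-1}$ directly via the composition formula of \cref{rmk:composition}, extracts the $\G$-factor $\Delta = g^{BD}\{[(h^{-1})^A]^{-1}\}^D(g^{BD})^{-1}$, and observes that $\Phi(\Delta)$ is a conjugate of a positive element in the bi-ordered image group. You instead invoke purity upfront to rewrite each $TgT^{-1}$ as $Wg^{A}W^{-1}$ over a common denominator $W$, which realizes $K$ as a direct limit of the $\G(z,z)$ and reduces every verification to a single component group; this is cleaner bookkeeping and makes the role of purity explicit, whereas the paper leaves the simplifications $g\cdot B = B$, $h^{-1}\cdot A = A$ implicit in its computation. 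You also go slightly beyond the statement by checking invariance under conjugation by $\Pi(\F)$, which the paper handles in the sentence immediately after the proposition; this is harmless and, as you note, is exactly what is needed to bi-order the semidirect product. Your flag that the first bullet of the remark preceding \cref{sub:left:order} tacitly assumes $\Phi$ has trivial kernel on each $\G(x,x)$ is accurate; the paper does not state this hypothesis explicitly, but it is needed for $P_K$ to satisfy the trichotomy axiom.
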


\begin{proof}
Let $P$ be the positive cone for $\Pi(\F\bowtie\G)$ defined in \cref{eq:cone} and let
$$P_K \coloneq P \cap K = \{ ThT^{-1} \mid h \text{ is positive and has trivial action} \}.$$
This is a positive cone for $K$ and, to see that it defines a bi-order, we need to show that it is closed under conjugacy by $K$.
Let us check this:
\begin{gather*}
(fgf^{-1}) (ThT^{-1}) (fgf^{-1})^{-1} = \{f (g \cdot B) g^B [(h^{-1})^A]^{-1} (h^{-1} \cdot A)^{-1} T^{-1}\} (f g^{-1} f^{-1})= \\
= \{ fB g^B [(h^{-1})^A]^{-1} A^{-1}T^{-1} \} (fg^{-1}f^{-1}) =\\
= fB (g^B [(h^{-1})^A]^{-1} \cdot D) \{g^B [(h^{-1})^A]^{-1}\}^D (g^C)^{-1} (g \cdot C)^{-1} f^{-1},
\end{gather*}
where $A,B,C,D\in\F$ are such that $TA=fB$ and $fC=TAD$.
The factor in $\G$ is $\Delta \coloneq \{g^B [(h^{-1})^A]^{-1}\}^D (g^C)^{-1} = g^{BD} \{[(h^{-1})^A]^{-1}\}^D (g^C)^{-1}$.
Since $C=BD$, this $\Delta$ is a conjugate of $\{[(h^{-1})^A]^{-1}\}^D$, which is positive by compatibility (\cref{def:compatible:ordering}).
The image of $\Delta$ via $\Phi$ is thus the conjugate of a positive element in the bi-ordered image group, hence it is positive.
Then the product belongs to $P$, and it belongs to $K$ because it is a product of elements of $K$, thus it lies in $P_K$.
\end{proof}

It is known that the semidirect product $G_1 \rtimes G_2$ of bi-ordered groups $G_1$ and $G_2$ is bi-ordered as soon as the action of $G_2$ by conjugacy on $G_1$ is order-preserving (for example \cite[Problem 1.23]{OrderableGroups}).
In our case, $\Pi(\F)$ is bi-ordered by hypothesis, $K$ is bi-ordered by \cref{prop:K:order}, and conjugating an element of $K$ by one of $\Pi(\F)$ does not change the factor in $\G$, so $K \rtimes \Pi(\F)$ is bi-ordered.
By \cref{lem:semidirect}, this is $\Pi(\F\bowtie\G)$, so we are done.

%%%%%%%%%%%%%%%%%%%%%%%%%

\section{Groups arising from digit rewriting systems}
\label{sec:edge:shifts}

In this section we introduce groups of $\G$-fractions of digit rewriting systems:
these can be thought of as generalizations of certain diagram groups, where $\G$ is some groupoid that permutes the digits.
We will use braided fractions as examples of applications of \cref{thm:main}.

\subsection{The Ore category of a digit rewriting system}

Consider a rewriting system whose set of objects is the set $\Sigma^*$ of finite words over an alphabet $\Sigma$ and whose set of rewriting rules $\R$ are as follows and there is at most one for each $x \in \Sigma$:
$$ x \leftarrow u_1 \dots u_k \ (k \geq 2, u_i\in\Sigma). $$
We call these \textbf{digit rewriting systems}, or \textbf{DRSs} for short.
\cref{sub:G:fractions} features some examples.

This determines a category $\F$ in the following way.
The set of objects is the same as that of the rewriting system, i.e., finite words over the alphabet $\Sigma$.
Morphisms are generated by applications of rewritings rules of $\R$, i.e.,
$$ w_1 x w_2 \leftarrow w_1 u_1 \dots u_k w_2 $$
where $w_1$ and $w_2$ are finite (possibly empty) words and $x \leftarrow u_1, \dots, u_k$ belongs to $\R$.
The order in which two disjoint rewritings are performed does not matter, meaning that this category is subject to the relations
\begin{gather*}
w_1 x_1 w_2 x_2 w_3 \leftarrow w_1 x_1 w_2 v_2 w_3 \leftarrow w_1 v_1 w_2 v_2 w_3 =\\
= w_1 v_1 w_2 v_2 w_3 \leftarrow w_1 v_1 w_2 x_2 w_3 \leftarrow w_1 v_1 w_2 v_2 w_3.
\end{gather*}

It is easy to see that $\F$ is cancellative and has common right multiples, so $\F$ is right-Ore.
Since rewritings decrease lengths, $\F$ has no non-trivial invertibles.

\begin{remark}
\label{rmk:F:diagram:group:orderable}
A quick comparison reveals that $\F$ is the $1$-skeleton of the Squier complex of the semigroup presentation corresponding to the rewriting system, so $\Pi(\F)$ is a diagram group (see \cite{DiagramGroups}).
This implies that it is bi-orderable by \cite{DiagramGroupsOrderable}.
\end{remark}

\subsection{Groups of \texorpdfstring{$\G$}{G}-fractions of a DRS}
\label{sub:G:fractions}

Consider an indirect product $\F\bowtie\G$, where $\F$ is a DRS.
Motivated by \cref{rmk:normal:forms}, we refer to a fundamental group $\Pi(\F\bowtie\G)$ as a \textbf{group of $\G$-fractions of $\F$}.

To illustrate what this is, let us first consider the groupoid whose morphisms are
$$ x_{\sigma(1)} \dots x_{\sigma(m)} \leftarrow x_1 \dots x_m, $$
for all $\sigma \in \mathrm{Sym}\{1,\dots,m\}$ and $m \in \mathbb{N}$.
We denote this category by $\P$ (for \textit{permutation}), although it implicitly depends on the objects of $\F$.
The category generated by $\F$ and $\P$ is an indirect product thanks to the following actions of $\F$ and $\G$ on each other:
if $f\in\F(w,-)$ is a single rewriting and $g\in\G(-,w)$, then
\begin{gather*}
f = ( w = x_1 \dots x_m \leftarrow x_1 \dots x_{l-1} u_1 \dots u_k x_{l+1} \dots x_m ),\\
g = ( x_{\sigma(1)} \dots x_{\sigma(m)} \leftarrow x_1 \dots x_m ).
\end{gather*}
We represent these with diagrams like those depicted in \cref{fig:f:g}, which will surely be familiar to a reader who is familiar with Thompson's groups and their generalizations.
Checking that the actions of $\F$ and $\G$ on each other satisfy the axioms of indirect products (\cite[Subsection 4.2]{Witzel}) is standard routine.

\begin{figure}\centering
\begin{subfigure}[b]{\textwidth}
\centering
\begin{tikzpicture}[yscale=.8]
    \node at (-5, -1) {$f\ =$};
    \node at (-4, -3.5) {$g\ =$};
    \node (A1) at (-4,0) {$x_1$};
    \node (A2) at (-3,0) {$x_2$};
    \node (A3) at (-2,0) {$\dots$};
    \node[blue] (Al1) at (-1,0) {$u_1$};
    \node[blue] (Al2) at (0,0) {$\dots$};
    \node[blue] (Al3) at (1,0) {$u_k$};
    \node (A4) at (2,0) {$\dots$};
    \node (A5) at (3,0) {$\dots$};
    \node (A6) at (4,0) {$x_{m-1}$};
    \node (A7) at (5,0) {$x_m$};
    \begin{scope}[yshift=-2.5cm]
    \node (B1) at (-3,0) {$x_1$};
    \node (B2) at (-2,0) {$x_2$};
    \node (B3) at (-1,0) {$\dots$};
    \node[red] (Bl) at (0,0) {$x_l$};
    \node (B4) at (1,0) {$\dots$};
    \node (B5) at (2,0) {$\dots$};
    \node (B6) at (3,0) {$x_{m-1}$};
    \node (B7) at (4,0) {$x_m$};
    \end{scope}
    \begin{scope}[yshift=-5cm]
    \node (C1) at (-3,0) {$x_{\overline{1}}$};
    \node (C2) at (-2,0) {$x_{\overline{2}}$};
    \node (C3) at (-1,0) {$\dots$};
    \node (C4) at (0,0) {$\dots$};
    \node[red] (Cl) at (1,0) {$x_l$};
    \node (C5) at (2,0) {$\dots$};
    \node (C6) at (3,0) {$x_{\overline{m-1}}$};
    \node (C7) at (4,0) {$x_{\overline{m}}$};
    \end{scope}
    \node[circle,fill,inner sep=1pt] (AB) at ($.5*(Al2)+.5*(Bl)$) {};
    \draw (A1) to (B1);
    \draw (A2) to (B2);
    \draw[dashed] (A3) to (B3);
    \draw[dashed] (A4) to (B4);
    \draw[dashed] (A5) to (B5);
    \draw (A6) to (B6);
    \draw (A7) to (B7);
    \draw[blue] (Al1) to (AB);
    \draw[dashed,blue] (Al2) to (AB);
    \draw[blue] (Al3) to (AB);
    \draw[red] (AB) to (Bl);
    \draw[dashed] (B1) to ($.5*(C3)+.5*(C4)+(0,.2)$);
    \draw (B2) to (C1);
    \draw[dashed] (B3) to (C6);
    \draw[dashed] ($.5*(B4)+.5*(B5)-(0,0.2)$) to (C7);
    \draw (B6) to (C2);
    \draw[dashed] (B7) to (C5);
    \draw[red,curvy] (Bl) to (Cl);
\end{tikzpicture}
\caption{Schematic diagrams for elements $f\in\F$ and $g\in\P$ and their product $gf$.}
\label{fig:f:g:diagrams}
\end{subfigure}
\\\vspace{\baselineskip}
\begin{subfigure}[b]{\textwidth}
\centering
\begin{tikzpicture}[yscale=.8]
    \node at (-5, -1) {$g^f\ =$};
    \node at (-5, -3.5) {$g \cdot f\ =$};
    \node (A1) at (-4,0) {$x_1$};
    \node (A2) at (-3,0) {$x_2$};
    \node (A3) at (-2,0) {$\dots$};
    \node[blue] (Al1) at (-1,0) {$u_1$};
    \node[blue] (Al2) at (0,0) {$\dots$};
    \node[blue] (Al3) at (1,0) {$u_k$};
    \node (A4) at (2,0) {$\dots$};
    \node (A5) at (3,0) {$\dots$};
    \node (A6) at (4,0) {$x_{m-1}$};
    \node (A7) at (5,0) {$x_m$};
    \begin{scope}[yshift=-2.5cm]
    \node (B1) at (-4,0) {$x_{\overline{1}}$};
    \node (B2) at (-3,0) {$x_{\overline{2}}$};
    \node (B3) at (-2,0) {$\dots$};
    \node (B4) at (-1,0) {$\dots$};
    \node[blue] (Bl1) at (0,0) {$u_1$};
    \node[blue] (Bl2) at (1,0) {$\dots$};
    \node[blue] (Bl3) at (2,0) {$u_k$};
    \node (B5) at (3,0) {$\dots$};
    \node (B6) at (4,0) {$x_{\overline{m-1}}$};
    \node (B7) at (5,0) {$x_{\overline{m}}$};
    \end{scope}
    \begin{scope}[yshift=-5cm]
    \node (C1) at (-3,0) {$x_{\overline{1}}$};
    \node (C2) at (-2,0) {$x_{\overline{2}}$};
    \node (C3) at (-1,0) {$\dots$};
    \node (C4) at (0,0) {$\dots$};
    \node[red] (Cl) at (1,0) {$x_l$};
    \node (C5) at (2,0) {$\dots$};
    \node (C6) at (3,0) {$x_{\overline{m-1}}$};
    \node (C7) at (4,0) {$x_{\overline{m}}$};
    \end{scope}
    \node[circle,fill,inner sep=1pt] (BC) at ($.5*(Bl2)+.5*(Cl)$) {};
    \draw[dashed] (A1) to ($.5*(B3)+.5*(B4)+(0,.2)$);
    \draw (A2) to (B1);
    \draw[dashed] (A3) to (B6);
    \draw[dashed] ($.5*(A4)+.5*(A5)-(0,.2)$) to (B7);
    \draw (A6) to (B2);
    \draw[dashed] (A7) to (B5);
    \draw[curvy,blue] (Al1) to (Bl1);
    \draw[dashed,curvy,blue] (Al2) to (Bl2);
    \draw[curvy,blue] (Al3) to (Bl3);
    \draw (B1) to (C1);
    \draw (B2) to (C2);
    \draw[dashed] (B3) to (C3);
    \draw[dashed] (B4) to (C4);
    \draw[dashed] (B5) to (C5);
    \draw (B6) to (C6);
    \draw (B7) to (C7);
    \draw[blue] (Bl1) to (BC);
    \draw[dashed,blue] (Bl2) to (BC);
    \draw[blue] (Bl3) to (BC);
    \draw[red] (BC) to (Cl);
\end{tikzpicture}
\caption{The actions of $\F$ and $\P$ on each other and their product $(g \cdot f) g^f$.}
\label{fig:f:g:action}
\end{subfigure}
\caption{Diagrams for $\Pi(\F\bowtie\P)$, where $\overline{i}$ denotes $\sigma^{-1}(i)$.}
\label{fig:f:g}
\end{figure}

Groups of $\P$-fractions of $\F$ are not orderable since they have torsion, but in \cref{sub:braided:fractions} we introduce braided versions to which \cref{thm:main} applies.

\subsubsection{The Higman-Thompson groups}

Consider $\Sigma=\{x\}$ and the rewriting rule
$$ x \leftarrow x^n. $$
This defines a category $\F_n$.
The fundamental group $\Pi(\F_n,k)$ based at some $k\in\Ob(\F_n)$ is the Higman-Thompson group $F_{n,k}$ \cite{HigmanThompson}.
In particular, for $n=2$ and $k=1$ this is Thompson's group $F$.
Similarly, $\Pi(\F_n\bowtie\P,k)$ is the Higman-Thompson group $V_{n,k}$ and the Higman-Thompson groups $T_{n,k}$ can be built by replacing $\P$ with a category of cyclic digit permutations.

\subsubsection{The Houghton groups}
\label{sub:houghton}

Consider $\Sigma=\{x,y_1,\dots,y_n\}$ and the rewriting rules
$$ y_i \leftarrow y_i x,\ \forall i \in \{1, \dots, n\}. $$
This defines a category $\FH_n$.
The fundamental group $\Pi(\FH_n \bowtie \P, y_1 \dots y_n)$ is the Houghton group $H_n$ \cite{Houghton}.

\subsubsection{Topological full groups of edge shifts}

For a directed graph $\Gamma$, the edge shift $E\Gamma$ (see for example \cite{LindMarcus}) defines a DRS, once an ordering of each set of edges of $\Gamma$ originating at a vertex is fixed.
The rewriting rules are then
$$ v \leftarrow w_1 \dots w_m, $$
where the $w_i$'s are the terminal vertices of the edges originating from $v$.
This defines a category $\F_\Gamma$.
The fundamental group $\Pi(\F_\Gamma \bowtie \P,u)$ is the topological full group of the initial edge shift $E\Gamma$ starting at $u$ \cite{Matui} (see \cite{Waltraud} for an interpretation of these groups as groups of almost automorphisms of trees).

\subsection{Groups of braided fractions of DRSs}
\label{sub:braided:fractions}

\subsubsection{Groupoid of digital braids}

Let $\R$ be a DRS with alphabet $\Sigma$.
Let us consider a groupoid $\B_{\R}$ (or $\B$ when $\R$ is understood) whose set of objects is $\Sigma^*$ and whose morphisms are digital braids.
Given words $w_1 = x_1 \dots x_l$ and $w_2 = y_1 \dots y_l$ in which every digit of the alphabet appears the same amount of times, an element of $\B(w_2,w_1)$ consists of $l$ disjoint arcs running monotonically inside a cylinder, from top to bottom, where the top and bottom of the cylinder are equipped with a tuple $(p_1, \dots, p_{l})$ of points labeled by the digits of $w_1$ and $w_2$ in their order, respectively.
Arcs are required to join points labeled by the same digit.
Two such morphisms are defined up to isotopy and composition is defined by gluing the top of one with the bottom of the other when the labels coincide.
For example, the left side of \cref{fig:digital:braid} portrays a morphism and \cref{fig:digital:braid:composition} shows and a composition.

\begin{figure}\centering
\begin{tikzpicture}
    \draw[gray] (1.5,-2) ellipse (2.2cm and .6cm); %bottom
    \draw[line width=4pt,white] (3,0) to[out=-90,in=90] (0,-2);
    \draw[red] (1,0) to[out=-90,in=90] (0,-2);
    \draw[line width=4pt,white] (3,0) to[out=-90,in=90] (2,-2);
    \draw[ForestGreen] (3,0) to[out=-90,in=90] (2,-2);
    \draw[line width=4pt,white] (0,0) to [out=-90,in=90] (3,-2);
    \draw[red] (0,0) to[out=-90,in=90] (3,-2);
    \draw[line width=4pt,white] (2,0) to [out=-90,in=90] (1,-2);
    \draw[blue] (2,0) to[out=-90,in=90] (1,-2);
    \draw[line width=4pt,white] (1.5,0) ellipse (2.2cm and .6cm); %top
    \draw[gray] (-.7,0) to (-.7,-2) (3.7,0) to (3.7,-2);
    \draw[gray] (1.5,0) ellipse (2.2cm and .6cm); %top
    \draw[red] (0,0) node[circle,inner sep=1.5pt,fill]{} node[above]{$x$};
    \draw[red] (1,0) node[circle,inner sep=1.5pt,fill]{} node[above]{$x$};
    \draw[blue] (2,0) node[circle,inner sep=1.5pt,fill]{} node[above]{$y$};
    \draw[ForestGreen] (3,0) node[circle,inner sep=1.5pt,fill]{} node[above]{$z$};
    \draw[red] (0,-2) node[circle,inner sep=1.5pt,fill]{} node[below]{$x$};
    \draw[blue] (1,-2) node[circle,inner sep=1.5pt,fill]{} node[below]{$y$};
    \draw[ForestGreen] (2,-2) node[circle,inner sep=1.5pt,fill]{} node[below]{$z$};
    \draw[red] (3,-2) node[circle,inner sep=1.5pt,fill]{} node[below]{$x$};
    \node at (4.5,-1) {$\xrightarrow{\Phi}$};
    \begin{scope}[xshift=6cm]
    \draw[gray] (1.5,-2) ellipse (2.2cm and .6cm); %bottom
    \draw[line width=4pt,white] (1,0) to[out=-90,in=90] (0,-2);
    \draw (1,0) to[out=-90,in=90] (0,-2);
    \draw[line width=4pt,white] (3,0) to[out=-90,in=90] (2,-2);
    \draw (3,0) to[out=-90,in=90] (2,-2);
    \draw[line width=4pt,white] (0,0) to [out=-90,in=90] (3,-2);
    \draw (0,0) to[out=-90,in=90] (3,-2);
    \draw[line width=4pt,white] (2,0) to [out=-90,in=90] (1,-2);
    \draw (2,0) to[out=-90,in=90] (1,-2);
    \draw[line width=4pt,white] (1.5,0) ellipse (2.2cm and .6cm); %top
    \draw[gray] (-.7,0) to (-.7,-2) (3.7,0) to (3.7,-2);
    \draw[gray] (1.5,0) ellipse (2.2cm and .6cm); %top
    \draw (0,0) node[circle,inner sep=1.5pt,fill]{};
    \draw (1,0) node[circle,inner sep=1.5pt,fill]{};
    \draw (2,0) node[circle,inner sep=1.5pt,fill]{};
    \draw (3,0) node[circle,inner sep=1.5pt,fill]{};
    \draw (0,-2) node[circle,inner sep=1.5pt,fill]{};
    \draw (1,-2) node[circle,inner sep=1.5pt,fill]{};
    \draw (2,-2) node[circle,inner sep=1.5pt,fill]{};
    \draw (3,-2) node[circle,inner sep=1.5pt,fill]{};
    \end{scope}
\end{tikzpicture}
\caption{A morphism of $\B(\textcolor{red}{x}\textcolor{blue}{y}\textcolor{ForestGreen}{z}\textcolor{red}{x}, \textcolor{red}{x}\textcolor{red}{x}\textcolor{blue}{y}\textcolor{ForestGreen}{z})$ (colors for visual aid) and its image in $\mathrm{B}_4$ via the digit-forgetting functor $\Phi$.}
\label{fig:digital:braid}
\end{figure}

\begin{figure}\centering
\begin{tikzpicture}
    \draw[gray] (1.5,-2) ellipse (2.2cm and .6cm); %bottom
    \draw[line width=4pt,white] (1,0) to[out=-90,in=90] (0,-2);
    \draw[red] (1,0) to[out=-90,in=90] (0,-2);
    \draw[line width=4pt,white] (3,0) to[out=-90,in=90] (2,-2);
    \draw[ForestGreen] (3,0) to[out=-90,in=90] (2,-2);
    \draw[line width=4pt,white] (0,0) to [out=-90,in=90] (3,-2);
    \draw[red] (0,0) to[out=-90,in=90] (3,-2);
    \draw[line width=4pt,white] (2,0) to [out=-90,in=90] (1,-2);
    \draw[blue] (2,0) to[out=-90,in=90] (1,-2);
    \draw[line width=4pt,white] (1.5,0) ellipse (2.2cm and .6cm); %top
    \draw[gray] (-.7,0) to (-.7,-2) (3.7,0) to (3.7,-2);
    \draw[gray] (1.5,0) ellipse (2.2cm and .6cm); %top
    \draw[red] (0,0) node[circle,inner sep=1.5pt,fill]{} node[above]{$x$};
    \draw[red] (1,0) node[circle,inner sep=1.5pt,fill]{} node[above]{$x$};
    \draw[blue] (2,0) node[circle,inner sep=1.5pt,fill]{} node[above]{$y$};
    \draw[ForestGreen] (3,0) node[circle,inner sep=1.5pt,fill]{} node[above]{$z$};
    \begin{scope}[yshift=-2cm]
    \draw[gray] (1.5,-2) ellipse (2.2cm and .6cm); %bottom
    \draw[line width=4pt,white] (1,0) to[out=-90,in=90] (2,-2);
    \draw[blue] (1,0) to[out=-90,in=90] (2,-2);
    \draw[line width=4pt,white] (2,0) to[out=-90,in=90] (3,-2);
    \draw[ForestGreen] (2,0) to[out=-90,in=90] (3,-2);
    \draw[line width=4pt,white] (3,0) to[out=-90,in=90] (0,-2);
    \draw[red] (3,0) to[out=-90,in=90] (0,-2);
    \draw[line width=4pt,white] (0,0) to[out=-90,in=90] (1,-2);
    \draw[red] (0,0) to[out=-90,in=90] (1,-2);
    \draw[line width=4pt,white] (-.7,0) arc(180:360:2.2cm and .6cm); %top
    \draw[gray] (-.7,0) to (-.7,-2) (3.7,0) to (3.7,-2);
    \draw[gray] (-.7,0) arc(180:360:2.2cm and .6cm); %top
    \draw[red] (0,0) node[circle,inner sep=1.5pt,fill]{} node[left]{$x$};
    \draw[blue] (1,0) node[circle,inner sep=1.5pt,fill]{} node[left]{$y$};
    \draw[ForestGreen] (2,0) node[circle,inner sep=1.5pt,fill]{} node[left]{$z$};
    \draw[red] (3,0) node[circle,inner sep=1.5pt,fill]{} node[left]{$x$};
    \draw[red] (0,-2) node[circle,inner sep=1.5pt,fill]{} node[below]{$x$};
    \draw[red] (1,-2) node[circle,inner sep=1.5pt,fill]{} node[below]{$x$};
    \draw[blue] (2,-2) node[circle,inner sep=1.5pt,fill]{} node[below]{$y$};
    \draw[ForestGreen] (3,-2) node[circle,inner sep=1.5pt,fill]{} node[below]{$z$};
    \end{scope}
    \node at (-1,-1) {$g_1$};
    \node at (-1,-3) {$g_2$};
    \node at (4.5,-2) {$=$};
    \node at (10.25,-2) {$g_2 g_1$};
    \begin{scope}[xshift=6cm,yshift=-.25cm]
    \draw[gray] (1.5,-3.5) ellipse (2.2cm and .6cm); %bottom
    \draw[line width=4pt,white] (3,0) to[out=-90,in=90] (3,-3.5);
    \draw[ForestGreen] (3,0) to[out=-90,in=90] (3,-3.5);
    \draw[line width=4pt,white] (1,0) to[out=-90,in=90,looseness=1.25] (0,-1.75);
    \draw[red] (1,0) to[out=-90,in=90,looseness=1.25] (0,-1.75);
    \draw[line width=4pt,white] (0,0) to [out=-90,in=90,looseness=1.25] (2,-1.75);
    \draw[red] (0,0) to[out=-90,in=90,looseness=1.25] (2,-1.75);
    \draw[line width=4pt,white] (2,0) to[out=-90,in=90] (1,-1.75);
    \draw[blue] (2,0) to[out=-90,in=90] (1,-1.75);
    \draw[line width=4pt,white] (1,-1.75) to[out=-90,in=90] (2,-3.5);
    \draw[blue] (1,-1.75) to[out=-90,in=90] (2,-3.5);
    \draw[line width=4pt,white] (2,-1.75) to [out=-90,in=90,looseness=1.25] (0,-3.5);
    \draw[red] (2,-1.75) to [out=-90,in=90,looseness=1.25] (0,-3.5);
    \draw[line width=4pt,white] (0,-1.75) to [out=-90,in=90,looseness=1.25] (1,-3.5);
    \draw[red] (0,-1.75) to [out=-90,in=90,looseness=1.25] (1,-3.5);
    \draw[line width=4pt,white] (1.5,0) ellipse (2.2cm and .6cm); %top
    \draw[gray] (-.7,0) to (-.7,-3.5) (3.7,0) to (3.7,-3.5);
    \draw[gray] (1.5,0) ellipse (2.2cm and .6cm); %top
    \draw[red] (0,0) node[circle,inner sep=1.5pt,fill]{} node[above]{$x$};
    \draw[red] (1,0) node[circle,inner sep=1.5pt,fill]{} node[above]{$x$};
    \draw[blue] (2,0) node[circle,inner sep=1.5pt,fill]{} node[above]{$y$};
    \draw[ForestGreen] (3,0) node[circle,inner sep=1.5pt,fill]{} node[above]{$z$};
    \draw[red] (0,-3.5) node[circle,inner sep=1.5pt,fill]{} node[below]{$x$};
    \draw[red] (1,-3.5) node[circle,inner sep=1.5pt,fill]{} node[below]{$x$};
    \draw[blue] (2,-3.5) node[circle,inner sep=1.5pt,fill]{} node[below]{$y$};
    \draw[ForestGreen] (3,-3.5) node[circle,inner sep=1.5pt,fill]{} node[below]{$z$};
    \end{scope}
\end{tikzpicture}
\caption{A composition $g_2 g_1$ between a $g_1 \in \B(\textcolor{red}{x}\textcolor{blue}{y}\textcolor{ForestGreen}{z}\textcolor{red}{x}, \textcolor{red}{x}\textcolor{red}{x}\textcolor{blue}{y}\textcolor{ForestGreen}{z})$ and a $g_2 \in \B( \textcolor{red}{x}\textcolor{red}{x}\textcolor{blue}{y}\textcolor{ForestGreen}{z},\textcolor{red}{x}\textcolor{blue}{y}\textcolor{ForestGreen}{z}\textcolor{red}{x})$. The composition is a pure digital braid.}
\label{fig:digital:braid:composition}
\end{figure}

\subsubsection{Braided fractions of DRSs}

The category generated by $\F$ and $\B$ is an indirect product $\F\bowtie\B$ (as with the permutation groupoid $\P$, checking this is routine).
Moreover, the action of $\F$ on $\B$ is injective (if the same expansion of two braids is the same, then the two braids were the same to begin with).
Thus, by \cref{lem:product}, $\F\bowtie\B$ is right-Ore.
The group of $\B$-fractions of $\F$, which is a braided version of the topological full group of an edge shift, thus belongs to the framework of \cite{Witzel}.

As an example, \cref{fig:new:braided:houghton} depicts an element of $\Pi(\FH_3\bowtie\B, y_1 y_2 y_3)$, where $\FH_3$ is the category described in \cref{sub:houghton}.
These groups include the braided Houghton groups, as we will see in \cref{sub:left:orderable:braided:houghton}.

\begin{figure}\centering
\begin{tikzpicture}
    \draw[line width=4pt,white] (.5,-3.5) to[out=90,in=-90] (0,-2);
    \draw[line width=4pt,white] (2,-3.5) to (2,-3) to (1,-2);
    \draw[line width=4pt,white] (3.5,-3.5) to[out=90,in=-90] (4,-2);
    \draw[line width=4pt,white] (2,-3) to (3,-2) (1.5,-2.5) to (2,-2);
    \draw[red] (.5,-3.5) to[out=90,in=-90] (0,-2);
    \draw[blue] (2,-3.5) to (2,-3) to (1,-2);
    \draw[ForestGreen] (3.5,-3.5) to[out=90,in=-90] (4,-2);
    \draw[black] (2,-3) to (3,-2) (1.5,-2.5) to (2,-2);
    \draw[line width=4pt,white] (2,-2) ellipse (2.7cm and .65cm); %bottom
    \draw[gray] (2,-2) ellipse (2.7cm and .65cm); %bottom
    \draw[line width=4pt,white] (0,0) to[out=-90,in=90] (0,-2);
    \draw[red] (0,0) to[out=-90,in=90] (0,-2);
    \draw[line width=4pt,white] (2,0) to[out=-90,in=90] (1,-2);
    \draw[blue] (2,0) to[out=-90,in=90] (1,-2);
    \draw[line width=4pt,white] (1,0) to [out=-90,in=90] (3,-2);
    \draw[black] (1,0) to[out=-90,in=90] (3,-2);
    \draw[line width=4pt,white] (4,0) to [out=-90,in=90] (2,-2);
    \draw[black] (4,0) to[out=-90,in=90] (2,-2);
    \draw[line width=4pt,white] (3,0) to [out=-90,in=90] (4,-2);
    \draw[ForestGreen] (3,0) to[out=-90,in=90] (4,-2);
    \draw[line width=4pt,white] (2,0) ellipse (2.7cm and .65cm); %top
    \draw[gray] (-.7,0) to (-.7,-2) (4.7,0) to (4.7,-2);
    \draw[gray] (2,0) ellipse (2.7cm and .65cm); %top
    \draw[line width=4pt,white] (.5,1.5) to (.5,.75) to (0,0);
    \draw[line width=4pt,white] (2,1.5) to (2,0);
    \draw[line width=4pt,white] (3.5,1.5) to (3.5,.75) to (3,0);
    \draw[line width=4pt,white] (.5,.75) to (1,0) (3.5,.75) to (4,0);
    \draw[red] (.5,1.5) to (.5,.75) to (0,0);
    \draw[blue] (2,1.5) to (2,0);
    \draw[ForestGreen] (3.5,1.5) to (3.5,.75) to (3,0);
    \draw[black] (.5,.75) to (1,0) (3.5,.75) to (4,0);
    \draw[red] (.5,1.5) node[circle,inner sep=1.5pt,fill]{} node[left]{$y_1$};
    \draw[blue] (2,1.5) node[circle,inner sep=1.5pt,fill]{} node[left]{$y_2$};
    \draw[ForestGreen] (3.5,1.5) node[circle,inner sep=1.5pt,fill]{} node[left]{$y_3$};
    \draw[red] (0,0) node[circle,inner sep=1.5pt,fill]{} node[left]{$y_1$};
    \draw[black] (1,0) node[circle,inner sep=1.5pt,fill]{} node[left]{$x$};
    \draw[blue] (2,0) node[circle,inner sep=1.5pt,fill]{} node[left]{$y_2$};
    \draw[ForestGreen] (3,0) node[circle,inner sep=1.5pt,fill]{} node[left]{$y_3$};
    \draw[black] (4,0) node[circle,inner sep=1.5pt,fill]{} node[left]{$x$};
    \draw[red] (0,-2) node[circle,inner sep=1.5pt,fill]{} node[left]{$y_1$};
    \draw[blue] (1,-2) node[circle,inner sep=1.5pt,fill]{} node[left]{$y_2$};
    \draw[black] (2,-2) node[circle,inner sep=1.5pt,fill]{} node[left]{$x$};
    \draw[black] (3,-2) node[circle,inner sep=1.5pt,fill]{} node[left]{$x$};
    \draw[ForestGreen] (4,-2) node[circle,inner sep=1.5pt,fill]{} node[left]{$y_3$};
    \draw[red] (.5,-3.5) node[circle,inner sep=1.5pt,fill]{} node[left]{$y_1$};
    \draw[blue] (2,-3.5) node[circle,inner sep=1.5pt,fill]{} node[left]{$y_2$};
    \draw[ForestGreen] (3.5,-3.5) node[circle,inner sep=1.5pt,fill]{} node[left]{$y_3$};
\end{tikzpicture}
\caption{An element of $\Pi(\FH_3\bowtie\B, \textcolor{red}{y_1} \textcolor{blue}{y_2} \textcolor{ForestGreen}{y_3})$.}
\label{fig:new:braided:houghton}
\end{figure}

\subsubsection{Purely braided fractions of DRSs}

Let us consider the discrete subgroupoid $\PB$ of $\B$ consisting of the morphisms of $\B$ with trivial underlying digit permutation (in particular, the source and the target are the same).
For instance, the morphism depicted in \cref{fig:digital:braid} does not belong to $\PB$, while the one in \cref{fig:digital:braid:composition} does.

The category generated by $\F$ and $\PB$ is an indirect product $\F\bowtie\PB$.
The action of $\F$ on $\PB$, being the same as that on $\B$, is injective, so $\F\bowtie\PB$ is right-Ore.
Moreover, note that $\F\bowtie\PB$ is a pure indirect product (\cref{def:pure}).

\subsection{Orderability of the groups}
\label{sub:left:orderable:examples}

Let us finally show that the groups of (purely) braided fractions are (bi-) left-orderable.

\subsubsection{Left-orderability of groups of \texorpdfstring{$\B$}{B}-fractions of DRSs}

Let us denote by $\mathrm{B}_k$ the braid group on $k$ strands.
Let $\mathrm{B}$ be the discrete groupoid whose set of objects is $\mathbb{N}_{\geq 1}$ and where $\mathrm{B}(k,k) = \mathrm{B}_k$.
We endow $\mathrm{B}$ with the discrete left-ordering comprised of the standard left-orderings of braid groups from \cite{DehornoyOrder}:
an element $g\in\mathrm{B}(k,k)$ is positive if and only if it is represented by a word in the Artin generators $\sigma_1, \dots, \sigma_{k-1}$ such that $\sigma_i$ occurs but $\sigma_1^{\pm1},\dots,\sigma_{i-1}^{\pm1},\sigma_i^{-1}$ do not, for some $i\in\{1,\dots,k-1\}$.
Let us call this property (D).

Given two words $w_1$ and $w_2$ of length $k$, we let $\Phi \colon \B(w_1,w_2) \to \mathrm{B}_k$ be the functor that forgets the digits, mapping a digital braid to the underlying braid.
See the right side of \cref{fig:digital:braid}, for example.
This determines a $\Phi$-discrete left-ordering on $\B$ induced by $\Phi$ (\cref{def:rho:discrete:ordering}).

\begin{remark}
Thanks to the axioms of indirect actions, it suffices to check compatibility (\cref{def:compatible:ordering}) on generators of $\F$ (by which we mean subsets of morphisms of which every other morphism is a product).
\end{remark}

For a DRS $\F$ and for $\G=\B$, we can thus let $f\in\F$ be a single reduction.
Let us assume that $g\in\Phi^{-1}(\sigma_i) \in \B(-,w)$, which is positive since $ 1_{\D(k,k)} \prec_k \Phi(g)$, and let us fix an arbitrary single reduction $f \in \F(w,-)$.
Then the element $g^f$ is obtained from $\sigma_m$ by replacing a strand, say the $l$-th strand, with $k$ strands (what $l$ and $k$ are depends solely on $f$), similarly to \cref{fig:f:g} but also keeping track of isotopy.
For a generic positive $g$, writing it as a word satisfying property (D), the standard argument used for braided Thompson groups (see \cite{DehornoyThompsonOrder,,CloningSystemsOrderable}) shows that $g^f$ satisfies the same property (D), so it is positive.

This means that the $\Phi$-discrete left-ordering on $\B$ is compatible with $\F$.
Thus, by \cref{thm:main}, we have the following result.

\begin{corollary}
\label{cor:left:orderable}
$\Pi(\F\bowtie\B,w)$ is left-orderable, for any $w\in\Sigma^*$.
\end{corollary}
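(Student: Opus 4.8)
The plan is to read off \cref{cor:left:orderable} from part (1) of \cref{thm:main} applied to the right-Ore indirect product $\F\bowtie\B$, so the real work is verifying the two hypotheses of that part. The hypothesis that $\Pi(\F)$ be left-orderable is already available: by \cref{rmk:F:diagram:group:orderable}, $\Pi(\F)$ is a diagram group, hence bi-orderable, hence in particular left-orderable. It remains to check that the $\Phi$-discrete left-ordering on $\B$ built above --- from the digit-forgetting functor $\Phi\colon\B\to\mathrm{B}$ and Dehornoy's orderings of the braid groups --- is compatible with $\F$ in the sense of \cref{def:compatible:ordering}.

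By the remark on checking compatibility on generators, it suffices to take $f\in\F$ a single reduction, say one whose target is a word $w$ of length $m$ and which expands the digit of $w$ sitting at some position $l$ into a block of $k\geq 2$ digits; then for $g\in\B(-,w)$ positive we must show that $g^f$ is positive. Inspecting the action of $\F$ on $\B$ (as in \cref{fig:f:g}, now also recording the isotopy class) shows that $\Phi(g^f)$ is obtained from $\Phi(g)\in\mathrm{B}_m$ by the strand-doubling operation that replaces the strand of $\Phi(g)$ starting at position $l$ by $k$ parallel strands forming a coherently framed band. So the claim reduces to: strand-doubling sends Dehornoy-positive braids to Dehornoy-positive braids, i.e.\ preserves property (D).

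This is the standard cabling argument from the theory of orderings of braided Thompson groups (\cite{DehornoyThompsonOrder, CloningSystemsOrderable}), which I would run as follows. Fix a word for $\Phi(g)$ witnessing property (D): its generator of least index, $\sigma_i$, occurs only with exponent $+1$, and none of $\sigma_1,\dots,\sigma_{i-1}$ occurs. Doubling the band and tracking its position down the braid word, each letter $\sigma_j^{\pm1}$ of $\Phi(g)$ is replaced either by a single generator of index $\geq j$, or --- when the band takes part in that crossing --- by a block $\sigma_{j'}\sigma_{j'+1}\cdots\sigma_{j'+k-1}$ with $j'\geq j$, all of whose generators carry the original sign because the band is coherently framed. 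Consequently no generator of index $<i$ can appear in $\Phi(g^f)$; moreover, using that $\sigma_1,\dots,\sigma_{i-1}$ are absent from $\Phi(g)$ (so the band can never cross past position $i$), every occurrence of the least-index generator of $\Phi(g^f)$ descends from an occurrence of $\sigma_i^{+1}$ in $\Phi(g)$ and hence has exponent $+1$. Thus $\Phi(g^f)$ has property (D) and is positive, establishing compatibility.

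With both hypotheses verified, \cref{thm:main}(1) yields that $\Pi(\F\bowtie\B,w)$ is left-orderable for every $w\in\Sigma^*$. The only step that is not purely formal is the stability of property (D) under strand-doubling; the reduction to generators is handled by the indirect-product axioms, and the identification of $\Phi(g^f)$ with a cabling of $\Phi(g)$ is immediate from the pictures.
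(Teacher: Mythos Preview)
Your proof is correct and follows essentially the same route as the paper: both invoke \cref{thm:main}(1), use \cref{rmk:F:diagram:group:orderable} for the left-orderability of $\Pi(\F)$, reduce the compatibility check to single reductions via the remark on generators, and then identify $\Phi(g^f)$ as a cabling of $\Phi(g)$ and appeal to the standard argument that property~(D) is preserved under strand-doubling. The only difference is that you spell out the cabling argument in slightly more detail than the paper, which simply cites \cite{DehornoyThompsonOrder,CloningSystemsOrderable} at that point.
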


\subsubsection{Bi-orderability of groups of \texorpdfstring{$\PB$}{PB}-fractions of DRSs}

For pure braids, first recall that $\Pi(\F)$ is bi-orderable (\cref{rmk:F:diagram:group:orderable}) and that $\F\bowtie\PB$ is pure.
The groupoid $\PB$ is discrete and each $\PB(w,w)$ is a group of pure braids.
Such groups are bi-ordered with an ordering inherited from the Magnus ordering of free groups (see \cite{BraidedBiOrderable} for the details).
We are thus only missing compatibility, which follows from \cite[Lemma 3.1]{BraidedBiOrderable} for the following reason.
That Lemma shows that, when replacing a strand of a pure braid with two strands, positivity is preserved.
When computing $g^f$ (for $f$ a single reduction), a strand is replaced with $k$ of strands.
This is the same as replacing a strand with two strand $k-1$ times.
Ultimately then, positivity is preserved by the action of $\F$, so \cref{thm:main} implies the following result.

\begin{corollary}
$\Pi(\F\bowtie\PB,w)$ is bi-orderable, for any $w\in\Sigma^*$.
\end{corollary}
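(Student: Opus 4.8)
The plan is to verify the three hypotheses of point (2) of \cref{thm:main} for the indirect product $\F\bowtie\PB$ and then invoke that theorem directly. Thus the proof will consist of three short verifications followed by a one-line deduction.

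First I would record that $\F\bowtie\PB$ is pure. This was already observed above, and the reason is that $\PB$ consists of digital braids whose underlying digit permutation is trivial, so expanding such a braid along a reduction of $\F$ does not rearrange the letters of the word: one gets $g\cdot f = f$ for every $g\in\PB(w,w)$ and every $f\in\F(w,-)$. Second, $\Pi(\F)$ is bi-orderable because $\F$ is a DRS, hence $\Pi(\F)$ is a diagram group (\cref{rmk:F:diagram:group:orderable}) and therefore bi-orderable by \cite{DiagramGroupsOrderable}.

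The substantive point is to exhibit a $\Phi$-discrete bi-ordering on $\PB$ that is compatible with $\F$. I would take $\Phi$ to be the digit-forgetting functor sending a digital braid in $\PB(w,w)$ with $|w|=k$ to the underlying pure braid in $\mathrm{B}_k$, which lands in the pure braid group on $k$ strands. Each such pure braid group is bi-orderable via the ordering inherited from the Magnus ordering of the relevant free groups, as worked out in \cite{BraidedBiOrderable}; since $\PB$ is a discrete groupoid (so $\mathrm{C}\PB(w)=\PB(w,w)$), this data determines a $\Phi$-discrete bi-ordering in the sense of \cref{def:rho:discrete:ordering}. For compatibility (\cref{def:compatible:ordering}) it suffices, by the remark preceding \cref{cor:left:orderable}, to check the condition on generators of $\F$, i.e.\ when $f$ is a single reduction. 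In that case $g^f$ is obtained from $g$ by replacing one strand of the underlying pure braid with $k$ parallel strands — with the position of that strand and the number $k$ depending only on $f$ — while keeping track of the isotopy. The key input is \cite[Lemma 3.1]{BraidedBiOrderable}, which shows that replacing a single strand of a pure braid with two strands carries positive braids to positive braids. Since a $k$-fold strand replacement is the same as $k-1$ successive strand doublings, iterating the lemma shows that positivity of $\Phi(g)$ forces positivity of $\Phi(g^f)$, so the $\Phi$-discrete bi-ordering is compatible with $\F$.

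Having checked purity, bi-orderability of $\Pi(\F)$, and the existence of a compatible $\Phi$-discrete bi-ordering on $\PB$, point (2) of \cref{thm:main} immediately yields that $\Pi(\F\bowtie\PB,w)$ is bi-orderable for every $w\in\Sigma^*$. I expect the only genuine obstacle to be the compatibility verification: all the real content is packaged in the strand-doubling lemma of \cite{BraidedBiOrderable}, so what remains is the (routine) bookkeeping to see that a single $k$-fold strand replacement decomposes into $k-1$ doublings and that each intermediate configuration is again a pure braid, so that the lemma applies at every step.
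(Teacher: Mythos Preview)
Your proposal is correct and follows essentially the same route as the paper: verify purity, bi-orderability of $\Pi(\F)$ via \cref{rmk:F:diagram:group:orderable}, equip $\PB$ with the Magnus bi-ordering on pure braid groups from \cite{BraidedBiOrderable}, and check compatibility on single reductions by decomposing a $k$-fold strand replacement into $k-1$ doublings so that \cite[Lemma 3.1]{BraidedBiOrderable} applies iteratively. The only minor difference is that you make the functor $\Phi$ explicit (digit-forgetting onto pure braid groups), whereas the paper leaves it implicit by directly identifying each $\PB(w,w)$ with a pure braid group.
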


\subsection{Examples and applications}
\label{sub:left:orderable:braided:houghton}

\cref{cor:left:orderable} allows to immediately show that the braided versions of topological full groups of edge shifts are left-orderable.
Furthermore, the braided Houghton groups introduced in \cite{braidedHoughtonThesis} are left-orderable because they embed into groups of $\B$-fractions of the DRSs $\mathrm{R}$ described in \cref{sub:houghton}, as we explain right below.

Consider the wide subgroupoid $\BH$ of $\B_{\mathrm{R}}$ generated by the two types of morphisms depicted in \cref{fig:braided:houghton:generators}. Intuitively, the morphisms of $\BH$ are the digital braids whose strands joining the digits $x$ can always be drawn on top of those joining digits $y_i$.
For example, the morphism depicted in \cref{fig:braided:houghton} belongs to $\BH$ while the digital braid shown in \cref{fig:new:braided:houghton} (between the expansion and the reduction) does not.
The category generated by $\FH_n$ and $\BH$ is an indirect product $\FH_n\bowtie\BH$, since the action of $\FH_n$ preserves the subgroupoid $\BH$.
Indeed, it is readily seen that $\FH_n$ sends generators of $\BH$ of type 2 to generators of type 2 and those of type 1 to products of a generator of type 1 and one of type 2.

\begin{figure}\centering
\begin{subfigure}[b]{.45\textwidth}
\centering
\begin{tikzpicture}
    \draw[line width=4pt,white] (2,-2) ellipse (2.7cm and .65cm); %bottom
    \draw[gray] (2,-2) ellipse (2.7cm and .65cm); %bottom
    \draw[line width=4pt,white] (.5,0) to[out=-90,in=90] (.5,-2);
    \draw[gray,dotted] (.5,0) to[out=-90,in=90] (.5,-2);
    \draw[line width=4pt,white] (1,0) to[out=-90,in=90] (1,-2);
    \draw[gray,dashed] (1,0) to[out=-90,in=90] (1,-2);
    \draw[line width=4pt,white] (2.3333,0) to [out=-90,in=90] (1.6667,-2);
    \draw[Plum] (2.3333,0) to [out=-90,in=90] (1.6667,-2);
    \draw[line width=4pt,white] (1.6667,0) to[out=-90,in=90] (2.3333,-2);
    \draw[black] (1.6667,0) to[out=-90,in=90] (2.3333,-2);
    \draw[line width=4pt,white] (3,0) to[out=-90,in=90] (3,-2);
    \draw[gray,dashed] (3,0) to[out=-90,in=90] (3,-2);
    \draw[line width=4pt,white] (3.5,0) to[out=-90,in=90] (3.5,-2);
    \draw[gray,dotted] (3.5,0) to[out=-90,in=90] (3.5,-2);
    \draw[line width=4pt,white] (2,0) ellipse (2.7cm and .65cm); %top
    \draw[gray] (-.7,0) to (-.7,-2) (4.7,0) to (4.7,-2);
    \draw[gray] (2,0) ellipse (2.7cm and .65cm); %top
    \draw[gray] (0,0) node[circle,inner sep=.8pt,fill]{};
    \draw[gray] (.5,0) node[circle,inner sep=.9pt,fill]{};
    \draw[gray] (1,0) node[circle,inner sep=1pt,fill]{};
    \draw[black] (1.6667,0) node[circle,inner sep=1.5pt,fill]{} node[above]{$x$};
    \draw[Plum] (2.3333,0) node[circle,inner sep=1.5pt,fill]{} node[above]{$y_i$};
    \draw[gray] (3,0) node[circle,inner sep=1pt,fill]{};
    \draw[gray] (3.5,0) node[circle,inner sep=.9pt,fill]{};
    \draw[gray] (4,0) node[circle,inner sep=.8pt,fill]{};
    \draw[gray] (0,-2) node[circle,inner sep=.8pt,fill]{};
    \draw[gray] (.5,-2) node[circle,inner sep=.9pt,fill]{};
    \draw[gray] (1,-2) node[circle,inner sep=1pt,fill]{};
    \draw[Plum] (1.6667,-2) node[circle,inner sep=1.5pt,fill]{} node[below]{$y_i$};
    \draw[black] (2.3333,-2) node[circle,inner sep=1.5pt,fill]{} node[below]{$x$};
    \draw[gray] (3,-2) node[circle,inner sep=1pt,fill]{};
    \draw[gray] (3.5,-2) node[circle,inner sep=.9pt,fill]{};
    \draw[gray] (4,-2) node[circle,inner sep=.8pt,fill]{};
\end{tikzpicture}
\caption{Generators of $\BH$ of type 1.}
\label{fig:braided:houghton:generators:type:1}
\end{subfigure}
\begin{subfigure}[b]{.525\textwidth}
\centering
\begin{tikzpicture}
    \draw[line width=4pt,white] (2.3333,-2) ellipse (3cm and .7cm); %bottom
    \draw[gray] (2.3333,-2) ellipse (3cm and .7cm); %bottom
    \draw[line width=4pt,white] (.5,0) to[out=-90,in=90] (.5,-2);
    \draw[gray,dotted] (.5,0) to[out=-90,in=90] (.5,-2);
    \draw[line width=4pt,white] (1.1667,0) to [out=-90,in=90] (1.1667,-2);
    \draw[Plum] (1.1667,0) to [out=-90,in=90] (1.1667,-2);
    \draw[line width=4pt,white,curvy] (1.8334,0) to (1.8334,-.75);
    \draw[line width=4pt,white,curvy] (1.8334,-1.25) to (1.8334,-2);
    \draw[black,curvy] (1.8334,0) to (1.8334,-.75);
    \draw[black,curvy] (1.8334,-1.25) to (1.8334,-2);
    \draw[line width=4pt,white,curvy] (2.3333,-.1) to (2.3333,-.75);
    \draw[line width=4pt,white,curvy] (2.3333,-1.25) to (2.3333,-1.9);
    \draw[black,curvy,dotted] (2.3333,-.1) to (2.3333,-.75);
    \draw[black,curvy,dotted] (2.3333,-1.25) to (2.3333,-1.9);
    \draw[line width=4pt,white,curvy] (2.8334,0) to (2.8334,-.75);
    \draw[line width=4pt,white,curvy] (2.8334,-1.25) to (2.8334,-2);
    \draw[black,curvy] (2.8334,0) to (2.8334,-.75);
    \draw[black,curvy] (2.8334,-1.25) to (2.8334,-2);
    \draw[line width=4pt,white] (3.5,0) to [out=-90,in=90] (3.5,-2);
    \draw[Orange] (3.5,0) to [out=-90,in=90] (3.5,-2);
    \draw[line width=4pt,white] (4.1667,0) to[out=-90,in=90] (4.1667,-2);
    \draw[gray,dotted] (4.1667,0) to[out=-90,in=90] (4.1667,-2);
    \draw[line width=4pt,white] (2.3333,0) ellipse (3cm and .7cm); %top
    \draw[gray] (-.6667,0) to (-.6667,-2) (5.3333,0) to (5.3333,-2);
    \draw[gray] (2.3333,0) ellipse (3cm and .7cm); %top
    \draw[gray] (0,0) node[circle,inner sep=.8pt,fill]{};
    \draw[gray] (.5,0) node[circle,inner sep=1pt,fill]{};
    \draw[Plum] (1.1667,0) node[circle,inner sep=1.5pt,fill]{} node[above]{$y_i$};
    \draw[black] (1.8334,0) node[circle,inner sep=1.25pt,fill]{} node[above]{$x$};
    \draw[black] (2.3333,0) node{$\dots$};
    \draw[black] (2.8333,0) node[circle,inner sep=1.25pt,fill]{} node[above]{$x$};
    \draw[Orange] (3.5,0) node[circle,inner sep=1.5pt,fill]{} node[above]{$y_{i+1}$};
    \draw[gray] (4.1667,0) node[circle,inner sep=1pt,fill]{};
    \draw[gray] (4.6667,0) node[circle,inner sep=.9pt,fill]{};
    \node at (2.3333,-1) {any $\sigma \in \mathrm{B}_k$};
    \draw[gray] (0,-2) node[circle,inner sep=.8pt,fill]{};
    \draw[gray] (.5,-2) node[circle,inner sep=1pt,fill]{};
    \draw[Plum] (1.1667,-2) node[circle,inner sep=1.5pt,fill]{} node[below]{$y_i$};
    \draw[black] (1.8334,-2) node[circle,inner sep=1.25pt,fill]{} node[below]{$x$};
    \draw[black] (2.3333,-2) node{$\dots$};
    \draw[black] (2.8333,-2) node[circle,inner sep=1.25pt,fill]{} node[below]{$x$};
    \draw[Orange] (3.5,-2) node[circle,inner sep=1.5pt,fill]{} node[below]{$y_{i+1}$};
    \draw[gray] (4.1667,-2) node[circle,inner sep=1pt,fill]{};
    \draw[gray] (4.6667,-2) node[circle,inner sep=.9pt,fill]{};
\end{tikzpicture}
\caption{Generators of $\BH$ of type 2.}
\label{fig:braided:houghton:generators:type:2}
\end{subfigure}
\caption{The generators that define $\BH$.}
\label{fig:braided:houghton:generators}
\end{figure}
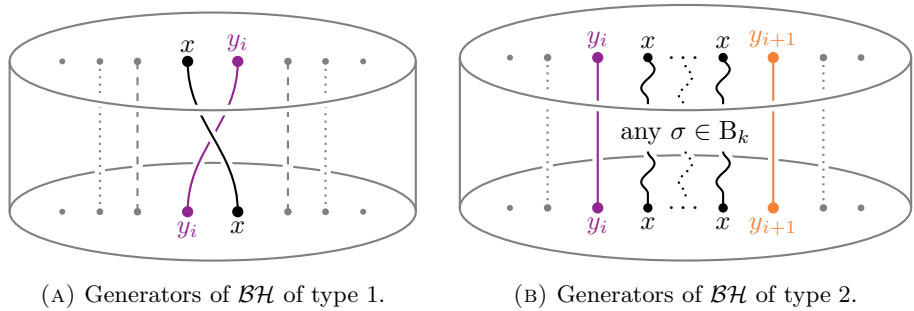

Note that, for a morphism in $\BH(w,w)$, if we consider a curve enclosing all the $x$'s and such that its cylinder encloses all the corresponding strands, then the braid defined in such cylinder belongs to the braid group $\mathrm{B}_k$ with $k$ being the number of $x$'s in $w$.
Moreover, in any representative of an element of $\Pi(\FH_n\bowtie\BH)$, each $y_i$ represents a ``tight combing'' (``straff gek\"ammt'' in \cite{braidedHoughtonThesis}), which is essentially an unbraided translation of the digits $x$ that are obtained by the expansions of a fixed $y_i$.
Indeed, expansions of $y_i$ cause the appearance on the top and bottom of the cylinder of digits $x$ on the immediate right of $y_i$, which are joined by strands obtained by duplicating that corresponding to $y_i$. 
The fundamental group of $\FH_n\bowtie\BH$ is thus the group of braids between the digits $x$ that are unbraided translations out of a finite subset, and is thus isomorphic to the braided Houghton group.

\begin{figure}\centering
\begin{tikzpicture}
    \draw[line width=3pt,white] (2.5,-2) ellipse (1cm and .4cm); %bottom
    \draw[line width=4pt,white] (2,-2) ellipse (2.7cm and .65cm); %bottom
    \draw[gray] (2,-2) ellipse (2.7cm and .65cm); %bottom
    \draw[gray,dotted] (2.5,-2) ellipse (1cm and .4cm); %bottom
    \draw[line width=4pt,white] (0,0) to[out=-90,in=90] (0,-2);
    \draw[red] (0,0) to[out=-90,in=90] (0,-2);
    \draw[line width=4pt,white] (2,0) to[out=-90,in=90] (1,-2);
    \draw[blue] (2,0) to[out=-90,in=90] (1,-2);
    \draw[line width=4pt,white] (1,0) to [out=-90,in=90] (3,-2);
    \draw[black] (1,0) to[out=-90,in=90] (3,-2);
    \draw[line width=4pt,white] (3,0) to [out=-90,in=90] (4,-2);
    \draw[ForestGreen] (3,0) to[out=-90,in=90] (4,-2);
    \draw[line width=4pt,white] (4,0) to [out=-90,in=90] (2,-2);
    \draw[black] (4,0) to[out=-90,in=90] (2,-2);
    \draw[line width=4pt,white] (2,0) ellipse (2.7cm and .65cm); %top
    \draw[line width=3pt,white] (.5,0) arc(180:0:.5cm and .2cm) (1.5,0) arc(180:360:1cm and .4cm) (3.5,0) arc(180:0:.5cm and .24cm) (4.5,0) arc(0:-180:2cm and .5cm); %top
    \draw[gray] (-.7,0) to (-.7,-2) (4.7,0) to (4.7,-2);
    \draw[gray] (2,0) ellipse (2.7cm and .65cm); %top
    \draw[gray,dotted] (.5,0) arc(180:0:.5cm and .2cm) (1.5,0) arc(180:360:1cm and .4cm) (3.5,0) arc(180:0:.5cm and .24cm) (4.5,0) arc(0:-180:2cm and .5cm); %top
    \draw[red] (0,0) node[circle,inner sep=1.5pt,fill]{} node[left]{$y_1$};
    \draw[black] (1,0) node[circle,inner sep=1.5pt,fill]{} node[left]{$x$};
    \draw[blue] (2,0) node[circle,inner sep=1.5pt,fill]{} node[left]{$y_2$};
    \draw[ForestGreen] (3,0) node[circle,inner sep=1.5pt,fill]{} node[left]{$y_3$};
    \draw[black] (4,0) node[circle,inner sep=1.5pt,fill]{} node[left]{$x$};
    \draw[red] (0,-2) node[circle,inner sep=1.5pt,fill]{} node[left]{$y_1$};
    \draw[blue] (1,-2) node[circle,inner sep=1.5pt,fill]{} node[left]{$y_2$};
    \draw[black] (2,-2) node[circle,inner sep=1.5pt,fill]{} node[left]{$x$};
    \draw[black] (3,-2) node[circle,inner sep=1.5pt,fill]{} node[left]{$x$};
    \draw[ForestGreen] (4,-2) node[circle,inner sep=1.5pt,fill]{} node[left]{$y_3$};
\end{tikzpicture}
\caption{An element of $\BH(\textcolor{red}{y_1} x \textcolor{blue}{y_2} \textcolor{ForestGreen}{y_3} x, \textcolor{red}{y_1} \textcolor{blue}{y_2} x x \textcolor{ForestGreen}{y_3})$.}
\label{fig:braided:houghton}
\end{figure}

%%%%%%%%%%%%%%%%%%%%%%%%%

\section*{Acknowledgements}

The authors would like to thank Mar\'{i}a Cumplido for bringing the topic of orderability in Thompson-like groups to their attention and are grateful to Mar\'{i}a Cumplido, Juan Gonz\'alez-Meneses and Anne Lonjou for useful conversations and advices and to Kai-Uwe Bux for providing the authors with Degenhardt's PhD thesis \cite{braidedHoughtonThesis}.

%%%%%%%%%%%%%%%%%%%%%%%%%

\emergencystretch=2em
\printbibliography[heading=bibintoc]

\end{document}